\numberwithin{equation}{section}
\theoremstyle{plain}
\newtheorem{theorem}{Theorem}[section]
\newtheorem{lemma}[theorem]{Lemma}
\newtheorem{corollary}[theorem]{Corollary}
\newtheorem{remark}{Remark}
\numberwithin{equation}{section}
\def\P{\mathbb{P} }
\def\R{\mathbb{R} }
\def\E{\mathbb{E} }
\def\1{\mathbf{1}}
\begin{document}
	\title[Subcritical branching killed Brownian motion]{
		Asymptotic behaviors of subcritical branching killed Brownian motion with drift}
\thanks{The research of this project is supported by the National Key R\&D Program of China (No. 2020YFA0712900).}
\thanks{The research of Yan-Xia Ren is supported by NSFC (Grant Nos. 12071011 and 12231002) and
the Fundamental Research Funds for the Central Universities, Peking University LMEQF}
\thanks{ Research supported in part by a grant from the Simons	Foundation	(\#960480, Renming Song).}
\thanks{*Yaping Zhu is the cooresponding author}
	\author[H. Hou]{Haojie Hou}
\author[Y.-X. Ren]{Yan-Xia Ren}
\author[R. Song]{Renming Song}
\author[Y. Zhu]{Yaping Zhu$^\ast$}
	\address{Haojie Hou\\ School of Mathematics and Statistics \\ Beijing Institute of Technology \\ Beijing 100081\\ P. R. China}
%HJ	\email{TBD}
   \email{houhaojie@bit.edu.cn}
	
	\address{Yan-Xia Ren\\ LMAM School of Mathematical Sciences \& Center for
		Statistical Science  \\ Peking University\\ Beijing 100871\\ P. R. China}
	\email{yxren@math.pku.edu.cn}
	
	\address{Renming Song\\ Department of Mathematics \\ University of Illinois Urbana-Champaign \\ Urbana, IL 61801 \\ U.S.A.}
	\email{rsong@illinois.edu}
	
	\address{Yaping Zhu\\ School of Mathematical Sciences \\ Peking University\\ Beijing 100871\\ P. R. China}
	\email{zhuyp@pku.edu.cn}

	\begin{abstract}
	In this paper, we study asymptotic behaviors of a subcritical branching killed Brownian motion with drift $-\rho$ and offspring distribution $\{p_k:k\ge 0\}$.  Let $\widetilde{\zeta}^{-\rho}$ be the extinction time of this subcritical branching killed Brownian motion, $\widetilde{M}_t^{-\rho}$ the maximal position of all the particles alive at time $t$ and $\widetilde{M}^{-\rho}:=\max_{t\ge 0}\widetilde{M}_t^{-\rho}$ the all time maximal position. Let $\P_x$ be the law of this subcritical branching killed Brownian motion when the initial particle is located at $x\in (0,\infty)$.
	Under the assumption $\sum_{k=1}^\infty k  (\log k) p_k <\infty$, we establish the decay rates of  $\P_x(\widetilde{\zeta}^{-\rho}>t)$ and $\P_x(\widetilde{M}^{-\rho}>y)$ as $t$ and $y$ tend to $\infty$ respectively. We also establish the decay rate of  $\P_x(\widetilde{M}_t^{-\rho}>z(t,\rho))$ as $t\to\infty$,
	where $z(t,\rho)=\sqrt{t}z-\rho t$ for $\rho\leq 0$ and $z(t,\rho)=z$ for $\rho>0$.
	As a consequence, we obtain a Yaglom-type limit theorem.
	\end{abstract}
\subjclass[2020]{60J80; 60J65; 69G57}

	\keywords{
		Branching killed Brownian motion, survival probability, maximal displacement,
		Feynman-Kac representation.}
	\maketitle

	\section{Introduction}\label{Sec1}

   A branching Brownian motion (BBM) with drift $-\rho$ is a continuous-time Markov process defined as follows: at time 0, there is a particle at $x\in \R$ and this particle moves according to a  Brownian  motion with drift $-\rho\in \R$.  After an exponential time with parameter $\beta>0$, independent of the spatial motion, this particle dies and is replaced by $k$ offspring with probability $p_k$, $k\ge 0$. The offspring move independently according to Brownian  motion with drift $-\rho$ from the place where they are born and obey the same branching mechanism as their parent. This procedure goes on.	Let $N_t^{-\rho}$ be the collection of particles alive at time $t$.  If $u \in N_t^{-\rho}$, let $X_u(t)$ denote the position of the particle $u$ at time $t$ and for $s\in (0,t)$, we denote by $X_u(s)$ the position at time $s$ of the ancestor of $u$.
 The point process $(Z_t^{-\rho})_{t\ge 0}$ defined by
	$$
	Z_t^{-\rho}:=\sum_{u\in N_t^{-\rho}}\delta_{X_u(t)}, \quad t\ge 0,
	$$
	is called a branching Brownian motion with drift $-\rho$. We will use $\mathbb{P}_x$ to denote the law of this process and use $\mathbb{E}_x$ to denote the corresponding expectation. 	Let
	\[
	\zeta:= \inf\{t>0, N_t^{-\rho}= \emptyset\}
	\]
	be the extinction time of $(Z_t^{-\rho})_{t\ge 0}$. Note that the law of $\zeta$ does not depend on $\rho$ and is equal to that of the extinction time of the continuous-time Galton-Waston process with the same branching mechanism as the BBM. Let $m:= \sum_{k=0}^\infty kp_k $ be the mean number of offspring and let $f$ be the generating function of the offspring distribution, i.e. $f(u)=\sum_{k=0}^{\infty}p_ku^k,u\in[0,1]$. It is well-known that the process
	will become extinct in finite time with probability $1$ if and only if $m<1$ (subcritical) or $m=1$ and $p_1\neq 1$ (critical).  When $m>1$ (supercritical), the process survives with positive probability.
	
	For any $t\ge0$, let
	 \begin{align}
		M_t^{-\rho}:=\max\{X_u(t):u\in N_t^{-\rho}\}
	\end{align}
	be the maximal position of all the particle alive at time $t$ and let
	\[
		M^{-\rho} := \sup_{t>0} M_t^{-\rho}
	\]	
	be the all time maximal position. In the subcritical and critical cases, $\P_x\left( M^{-\rho} <\infty \right)=1$
	 for any $x, \rho\in \R$.
	
	In the critical case
	 $m=1$ and $p_1\neq 1$, Sawyer and Fleischman \cite{Sawyer79}  proved
	that if	$\beta=1$ and	the offspring distribution has 	finite third moment, then
	\begin{align}\label{tail-prob-M}
		\lim_{x\to\infty}x^2 \mathbb{P}_0(M^0\ge x)
		=\frac{6}{\sigma^2 },
	\end{align}
	where $\sigma$ is the variance of the offspring distribution.
	For a critical branching random walk with spatial motion having finite
	$(4+\varepsilon)$th
	moment, a similar result as \eqref{tail-prob-M} was proved by Lalley and Shao \cite{Lalley15}. It was also proved in \cite{Lalley15}  that the law of	$M_t^0/\sqrt{t}$ under $\P_0\left(\cdot | \zeta>t\right)$ converges weakly to some random variable.
	For related results	in the case of	 critical branching L\'{e}vy  processes, see \cite{Profeta24}.

	In the subcritical case $m\in (0,1)$, let
	\begin{align}\label{Def-alpha}
		\alpha:= \beta (1-m)\in (0,\infty).
	\end{align}
	Define
	\begin{align}\label{def_Phi}
		\Phi(u):= \beta\left(f(1-u)-(1-u)\right)=: \left(\alpha +\varphi(u)\right)u, \quad u\in [0, 1],
	\end{align}
	where $\varphi(u)=\frac{\Phi(u)-\alpha u}{u}$ for $u\in (0, 1]$ and $\varphi(0)= \Phi'(0+)-\alpha=0$.
	It is well-known (see Theorem 2.4 in \cite[p.121]{AH1983})	that the limit
	\begin{align}\label{Decay-Survival-Probability}
		\lim_{t\to\infty} e^{\alpha t} \P_0(\zeta>t)= C_{sub}\in (0,\infty)
	\end{align}
	if and only if
	\begin{align}\label{LLogL-moment-condition}
		\sum_{k=1}^\infty   k (\log k )  p_k <\infty.
	\end{align}
Now we give another equivalent form of \eqref{Decay-Survival-Probability}.
 For any $t>0$, define
	\begin{align}\label{def_g}
		g(t):=\mathbb{P}_0(\zeta>t).
	\end{align}
	It is well-known that $g(t)$ satisfies the equation
	\[
	\frac{\mathrm{d}}{\mathrm{d} t} g(t)= -\Phi(g(t))= -\left(\alpha + \varphi (g(t))\right)g(t),
	\]
	thus
	\begin{align}\label{e:rsnew}
		e^{\alpha t}g(t) = \exp\left\{-\int_0^t \varphi(g(s))\mathrm{d}s \right\}.
	\end{align}
    It follows from  \eqref{Decay-Survival-Probability} that
	\begin{align}\label{Constant-C-sub}
		C_{sub}= \exp\left\{-\int_0^\infty \varphi(g(s))\mathrm{d}s \right\}.
	\end{align}
	Therefore, \eqref{Decay-Survival-Probability} is equivalent to
	\begin{align}\label{Integral-of-phi-is-finite}
		\int_0^\infty \varphi(g(s))\mathrm{d} s<\infty.
	\end{align}

	For $M^{-\rho}$,
	when the underlying motion is a standard Brownian motion and the offspring distribution has finite third moment,  it was proved in \cite{Sawyer79} that,
 if $\rho=0$,
	\begin{align}\label{tail-prob-sub-M}
		\lim_{x\to\infty}\frac{\mathbb{P}_0
			(M^0>x)
			}{(1-m)s(x)e^{-\sqrt{2\alpha}x}}=1,
	\end{align}
	where $s(x)$ is a bounded positive function. The limit \eqref{tail-prob-sub-M} was later generalized in \cite{Profeta24} to subcritical branching spectrally negative L\'{e}vy processes.
	When specialized to our setting, \cite[Theorem 1.1]{Profeta24}  says
	that when $\sum_{k=0}^\infty k^3 p_k <\infty$,  there exists a constant $\kappa \in (0,\infty)$ such that
	\begin{align}\label{Generalized-Tail-of-M}
		\lim_{x\to\infty} e^{\left(\rho + \sqrt{2\alpha+\rho^2}\right) x}\P_0\left(M^{-\rho} \geq x\right) =\kappa.
	\end{align}
	In the case of subcritical branching random walks, it was proved in \cite[Theorem 1.2]{Neuman17}
	that when the random walk has finite range and is nearly right-continuous in the sense of \cite{Neuman17}, a 	similar result as \eqref{tail-prob-sub-M} holds. In \cite{Neuman17}, the authors
	also gave some estimates for the limit behavior of $\P_0(M^0\geq x)$ in the case of
	 general subcritical branching random walks.	For related results about near-critical branching random walks, see \cite{Neuman21}.

	In this paper, we are interested in the asymptotic behaviors of  branching killed Brownian motions with drift $-\rho$, in which particles are killed (along with their   descendants) upon hitting the origin. The point process
$(\widetilde{Z}_t^{-\rho})_{t\ge 0}$ defined by
	\begin{align}
		\widetilde{Z}_t^{-\rho}:=\sum_{u\in N_t^{-\rho}}1_{\{\min_{s\le t}X_u(s)>0\}}\delta_{X_u(t)},
	\end{align}
	is called a branching killed Brownian motions with drift $-\rho$.  Let
	\[
	\widetilde{\zeta}^{-\rho}:=\inf\left\{t\ge 0:\widetilde{Z}_t^{-\rho}((0,\infty))=0\right\}
	\]
	be the extinction time of $(\widetilde{Z}_t^{-\rho})_{t\ge 0}$.  We define the maximal position at
	time $t$  and the all time maximal position of $(\widetilde{Z}_t^{-\rho})_{t\ge 0}$ by
	\begin{align}\label{Def-Maximal-displacement}
		\widetilde{M}_t^{-\rho}:=\max_{u\in N_t^{-\rho}: \min_{s\leq t} X_u(s)>0}X_u(t)\quad\mbox{and}\quad \widetilde{M}^{-\rho}:=\max_{t\ge 0}\widetilde{M}_t^{-\rho}.
	\end{align}
	In the critical case ($m=1$ and $p_1\neq 1$),  Lalley and Zheng \cite[Theorem 6.1]{LZ15} proved that, if $\sum_{k=0}^{\infty}k^3p_k<\infty$, then
	\begin{align}
		\lim_{y\to\infty}y^3\P_x(\widetilde{M}^0 \ge y)=C_1 x,
	\end{align}
	where $C_1\in (0,\infty)$ is  a constant independent of $x$. It was also shown in \cite[Theorem 6.1]{LZ15} that,  for any $s\in (0,1)$,
	\begin{align}
		\lim_{y\to\infty}y^2\P_{sy}(\widetilde{M}^0\ge y)=C_2(s)\in (0, \infty).
	\end{align}
	Recently, Hou et al. \cite{Hou24} studied the asymptotic behaviors of the tails of
	the extinction time and the maximal  displacement of critical branching killed L\'{e}vy processes under some assumptions on the spatial motion and the assumption that the offspring distribution belongs to the domain of attraction of an $\alpha$-stable distribution for $\alpha \in(1,2]$.

	There are also quite a few papers in the literature studying the asymptotic behaviors of supercritical (i.e., $m\in (1, \infty)$)  branching killed
	Brownian motions  with drift $-\rho$.
	Kesten \cite{Kesten78} proved that, when  $\rho> \sqrt{2\beta (m-1)}$,  the process will become extinct almost surely and Harris and Harris \cite[Theorem 1]{Harris07} obtained the asymptotic behavior of the survival probability.  In the case $\rho <\sqrt{2\beta (m-1)}$, 	Harris, Harris and Kyprianou \cite{Harris06}  investigated the large deviation probability of	maximal position.
	 For related results  in the critical case $\rho =\sqrt{2\beta (m-1)}$, see \cite{Berestycki14, Kesten78,Maillard23,Maillard22}.

	The main focus of this paper is on the asymptotic behaviors of subcritical branching killed Brownian motions with drift.
	More precisely, we will study the asymptotic behaviors of $\P_x\left(\widetilde{\zeta}^{-\rho}>t\right)$ and $\mathbb{P}_x(\widetilde{M}^{-\rho}> y)$ as $t$ and
$y$ tend to $\infty$, respectively.
We will also study the decay rate of  $\mathbb{P}_x(\widetilde{M}_t^{-\rho}>z(t,\rho))$, where $z(t,\rho)= \sqrt{t}z-\rho t$ for $\rho\leq 0$ and $z(t,\rho)= z$ for $\rho>0$.

	Our first main result is as follows. Recall that $C_{sub}$ is given in \eqref{Decay-Survival-Probability}.  Also, the notation $f(t)\sim g(t)$ as $t\to a$ means that $\lim_{t\to a}f(t)/g(t)=1$.

	\begin{theorem}\label{thm1}
		Suppose that \eqref{LLogL-moment-condition} holds and $x>0$.
		\begin{enumerate}
			\item[(i)] If $\rho =0$, then
			\begin{align}
			\lim_{t\to\infty}\sqrt{t}e^{\alpha t}\P_x\left(\widetilde{\zeta}^{-\rho}>t\right)= \sqrt{\frac{2}{\pi}}C_{sub} x.
			\end{align}
			\item[(ii)] If $\rho<0$, then
			\begin{align}
				\lim_{t\to\infty}e^{\alpha t}\P_x\left(\widetilde{\zeta}^{-\rho}>t\right)= C_{sub}(1-e^{2\rho x}).
			\end{align}
			\item[(iii)] If $\rho>0$, then
			\begin{align}
				\lim_{t\to\infty}t^{\frac{3}{2}}e^{(\alpha+\frac{\rho^2}{2}) t}\P_x\left(\widetilde{\zeta}^{-\rho}>t\right)= \sqrt{\frac{2}{\pi}}C_0(\rho) xe^{\rho x},
			\end{align}
			where $C_0(\rho):=\lim_{N\to\infty}e^{(\alpha+ \frac{\rho^2}{2})N}
			\int_{0}^{\infty}ye^{-\rho y} \P_y\left(\widetilde{\zeta}^{-\rho}>N\right) \mathrm{d}y \in (0,\infty)$.
		\end{enumerate}
	   Furthermore, for any $\rho\in \R$, as $t\to\infty$,
	    \[
	    \P_x\left( \widetilde{\zeta}^{-\rho}>t\right) \sim \Gamma_\rho
		\E_x\left( \widetilde{Z}_t^{-\rho}((0,\infty))\right),
	    \]
	    where $\Gamma_\rho = C_{sub}$ when $\rho\leq 0$ and $\Gamma_\rho= \rho^2 C_0(\rho)$ when $\rho>0$.
 \end{theorem}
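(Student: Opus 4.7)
The plan is to combine a Feynman--Kac representation of the survival probability with sharp hitting-time asymptotics for Brownian motion with drift, killed at the origin. Write $v(t,x):=\P_x(\widetilde{\zeta}^{-\rho}>t)$. By a standard backward-equation argument (differentiate the generating function $F(t,x;\theta):=\E_x[\theta^{\Z_t^{-\rho}((0,\infty))}]$ at $\theta=0$ and use $\Phi(v)=(\alpha+\varphi(v))v$), the function $v$ solves
\[
\partial_t v = \tfrac12\partial_{xx}v - \rho\,\partial_x v - \alpha v - \varphi(v)\,v,\qquad x>0,
\]
with $v(t,0)=0$ and $v(0,x)=1$. Treating $-\alpha-\varphi(v(t-s,\cdot))$ as a time-dependent potential, the Feynman--Kac formula yields
\[
v(t,x) = e^{-\alpha t}\,\E_x^{-\rho}\!\left[\1_{\{\tau_0>t\}}\,\exp\!\left(-\int_0^t\varphi\bigl(v(t-s,B_s)\bigr)\,\d s\right)\right],
\]
where $(B_s)$ under $\E_x^{-\rho}$ is a Brownian motion with drift $-\rho$ started from $x$ and $\tau_0$ is its first hitting time of $0$.

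The second step is to handle the linear many-to-one factor $\E_x[\Z_t^{-\rho}((0,\infty))]=e^{-\alpha t}\P_x^{-\rho}(\tau_0>t)$ via reflection and Girsanov: $\P_x^{0}(\tau_0>t)\sim\sqrt{2/\pi}\,x/\sqrt{t}$ when $\rho=0$; $\P_x^{-\rho}(\tau_0>t)\to 1-e^{2\rho x}$ when $\rho<0$; and $\P_x^{-\rho}(\tau_0>t)\sim \frac{2xe^{\rho x}}{\rho^2\sqrt{2\pi}\,t^{3/2}}\,e^{-\rho^2 t/2}$ when $\rho>0$. These three explicit asymptotics already explain the $t$- and $x$-dependence appearing in (i)--(iii).

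For cases (i) and (ii), the substantive step is to prove that the nonlinear Feynman--Kac factor $\e_t:=\exp(-\int_0^t\varphi(v(t-s,B_s))\,\d s)$ converges, in the relevant conditional expectation, to $C_{sub}$. The key input is that once $B_s$ is bounded away from $0$, $v(t-s,B_s)$ is close to the free BBM survival probability $g(t-s)$ (since killing then costs little), so that $\varphi(v(\cdot,B_\cdot))\approx\varphi(g(\cdot))$; recalling $C_{sub}=\exp(-\int_0^\infty\varphi(g(s))\,\d s)$ from \eqref{Constant-C-sub}, integration in $s$ produces the limiting factor $C_{sub}$. Conditional on $\tau_0>t$, the path $(B_s)_{s\le t}$ is of order $\sqrt{t}$ in the bulk (Brownian meander) when $\rho=0$, and it drifts ballistically away from $0$ when $\rho<0$; in either case it is at distance $\gg 1$ from the origin for essentially all $s\in[0,t]$, and the bounds $\varphi(v)\le\varphi(g)$ (using $v\le g$ and monotonicity of $\varphi$, which comes from convexity of $\Phi$) together with $\e_t\le 1$ give dominated convergence.

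Case (iii) is the main obstacle, and my plan is to handle it by linearisation rather than by direct Feynman--Kac. Applying the branching Markov property at time $t-N$ for a large fixed $N$ gives
\[
v(t,x) = \E_x\!\left[1-\prod_{u}\bigl(1-v(N,X_u(t-N))\bigr)\right],
\]
the product being over particles alive in $(0,\infty)$ at time $t-N$. The leading term of the expansion equals
\[
\int_0^\infty v(N,y)\,e^{-\alpha(t-N)}\,p_{t-N}^{-\rho}(x,y)\,\d y,
\]
where $p_t^{-\rho}(x,y)=e^{-\rho(y-x)-\rho^2 t/2}p_t^0(x,y)$ and $p_t^0(x,y)\sim 2xy/(\sqrt{2\pi}\,t^{3/2})$ is the transition density of standard Brownian motion killed at $0$. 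Inserting this asymptotic reduces the problem to establishing that
\[
C_0(\rho)=\lim_{N\to\infty}e^{(\alpha+\rho^2/2)N}\int_0^\infty y e^{-\rho y}v(N,y)\,\d y
\]
exists in $(0,\infty)$; proving this convergence (by a monotonicity-and-bounds argument in $N$) together with using the $L\log L$ assumption to make the higher-order terms of the product expansion negligible uniformly in $t$ is the technically hardest single step. Finally, the ``Furthermore'' assertion is immediate: dividing each of (i)--(iii) by the corresponding mean asymptotic from step two cancels the $1/\sqrt{t}$, the constant $1-e^{2\rho x}$, and the $t^{-3/2}e^{-\rho^2 t/2}$ factor respectively, leaving $\Gamma_\rho=C_{sub}$ for $\rho\le 0$ and $\Gamma_\rho=\rho^2 C_0(\rho)$ for $\rho>0$.
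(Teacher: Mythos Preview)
Your plan for (i) and (ii) matches the paper: the Feynman--Kac representation, the lower bound via $v\le g$ and monotonicity of $\varphi$, and the identification of $C_{sub}=\exp(-\int_0^\infty\varphi(g))$ are exactly what the paper does. One caution: the upper bound is not quite ``dominated convergence''. To push $\varphi(v(t-s,B_s))$ up to $\varphi(g(t-s))$ you need a \emph{lower} bound on $v$ along the path, and $v(t-s,y)$ is only close to $g(t-s)$ when $y$ is large. The paper handles this by restricting to the last window $s\in[t-N,t]$, splitting on the event $\{\inf_{r\in[t-N,t]}B_r\ge z_t+t^\gamma\}$, and showing the complementary event is negligible; you should expect to need a comparable localisation.

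For (iii) your route diverges from the paper, and there is a real gap. You propose the branching Markov property at time $t-N$ and then linearise $1-\prod_u(1-v(N,X_u))$. The error in that linearisation is controlled by $\tfrac12\,\E_x\big[(\sum_u v(N,X_u))^2\big]$, which is bounded by $\tfrac12 g(N)^2\,\E_x[|\widetilde N_{t-N}|^2]$. Under only the $L\log L$ hypothesis the offspring law may have infinite variance, in which case this second moment is infinite for every $t>N$; the ``$L\log L$ to kill the higher-order terms'' step does not go through as stated. You would also need a separate monotonicity/compactness argument for the existence of $C_0(\rho)$.

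The paper avoids both problems by staying inside the single-particle Feynman--Kac formula for (iii) as well. It sandwiches the exponent: dropping $\int_0^{t-N}\varphi(v)\,\d s$ gives an upper bound, replacing it by $\int_0^{t-N}\varphi(g)\,\d s$ gives a lower bound, and the ratio of the two bounds is $\exp(-\int_N^\infty\varphi(g))\to 1$ by \eqref{Integral-of-phi-is-finite} (this is exactly where $L\log L$ enters). The common main factor is, by the ordinary Markov property of $B$ at time $t-N$, equal to $e^{-\alpha t}\mathbf E_x^{-\rho}\big[\1_{\{\tau_0>t-N\}}f_N(B_{t-N})\big]$ with $f_N(y)=e^{\alpha N}\P_y(\widetilde\zeta^{-\rho}>N)$, and the conditional limit $\mathbf P_x^{-\rho}(B_{t-N}\in\cdot\mid\tau_0>t-N)\Rightarrow \rho^2 y e^{-\rho y}\d y$ finishes the computation. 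A pleasant by-product of this sandwich is that the existence of $C_0(\rho)=\lim_N e^{(\alpha+\rho^2/2)N}\int_0^\infty y e^{-\rho y}\P_y(\widetilde\zeta^{-\rho}>N)\,\d y$ falls out automatically: the inequalities $c_N e^{-\int_N^\infty\varphi(g)}\le \liminf_t\le\limsup_t\le c_N$ force both the $t$-limit and the $N$-limit to exist and coincide. If you want to keep your linearisation idea, you would need either a second-moment assumption or a more delicate truncation of the product; the paper's Feynman--Kac sandwich is the cleaner path under $L\log L$.
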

 \begin{remark}\label{remark}
Combining Theorem \ref{thm1} with the the asymptotic behavior of $\mathbf{P}_x^{-\rho}(\tau_0>t)$ (where, for any $y\in \R$, $\tau_y$ is the first hitting time of $y$), we see that,
when $\rho\leq 0$, $\P_x\left( \widetilde{\zeta}^{-\rho}>t\right)\sim \P_x\left(\zeta>t\right) \mathbf{P}_x^{-\rho}(\tau_0>t)$, i.e.,
the branching and the spatial motion are nearly independent. However, due to the appearance of $C_0(\rho)$, when $\rho>0$, the branching and the spatial motion are not nearly independent.
 \end{remark}

Our second main result is on the tail probability
$\mathbb{P}_x(\widetilde{M}^{-\rho}>y)$. In the case  when there is no killing,
the results \eqref{tail-prob-sub-M} and \eqref{Generalized-Tail-of-M} were proved under the assumption that the offspring distribution has finite third moment.
Our assumption \eqref{LLogL-moment-condition} on the offspring distribution is much weaker.

 \begin{theorem}\label{thm2}
	Assume that \eqref{LLogL-moment-condition} holds.  Then for any $\rho\in \R$,  there exists a constant $C_*(\rho)\in (0,\infty)$ such that for any $x>0$,
	\begin{align}
		\lim_{y\to\infty} e^{(\rho +\sqrt{2\alpha+\rho^2}) y} \P_x(\widetilde{M}^{-\rho}>y)= 2 C_*(\rho)e^{\rho x}\sinh(x\sqrt{2\alpha+\rho^2}).
	\end{align}
 \end{theorem}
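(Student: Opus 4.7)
The plan is to reduce the problem to a nonlinear Dirichlet boundary value problem for $u_y(x) := \mathbb{P}_x(\widetilde{M}^{-\rho} > y)$ on $(0,y)$, rewrite the solution as a Feynman--Kac expectation against Brownian motion with drift $-\rho$ killed at $0$, and then extract the asymptotic by factoring out the explicit linear part and performing a Doob $h$-transform. By conditioning on the first branching event of the initial particle and using the branching property (together with killing at $0$ and continuity of Brownian paths at $y$), $u_y$ satisfies on $(0,y)$
\begin{equation*}
\tfrac12 u_y''(x)-\rho\,u_y'(x)=\Phi(u_y(x))=\bigl(\alpha+\varphi(u_y(x))\bigr)u_y(x),\qquad u_y(0)=0,\; u_y(y-)=1.
\end{equation*}
Viewing $\alpha+\varphi(u_y(\cdot))$ as a self-consistent killing rate, the Feynman--Kac formula yields
\begin{equation*}
u_y(x)=\mathbf{E}_x^{-\rho}\!\left[\exp\!\Bigl(-\!\int_0^{\tau_y}\!\bigl(\alpha+\varphi(u_y(B_s))\bigr)\,\mathrm{d}s\Bigr);\ \tau_y<\tau_0\right].
\end{equation*}

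A direct computation shows that $\phi(x):=e^{\rho x}\sinh(x\sqrt{2\alpha+\rho^2})$ solves $\tfrac12\phi''-\rho\phi'-\alpha\phi=0$ on $(0,\infty)$ with $\phi(0)=0$, so the linear part $h_y(x):=\mathbf{E}_x^{-\rho}[e^{-\alpha\tau_y};\,\tau_y<\tau_0]=\phi(x)/\phi(y)$ satisfies $e^{(\rho+\sqrt{2\alpha+\rho^2})y}h_y(x)\to 2\phi(x)$ as $y\to\infty$; this already produces both the exponential rate and the $x$-profile that appear in Theorem~\ref{thm2}. The Doob $\phi$-transform turns Brownian motion with drift $-\rho$ (killed at $0$ and weighted by $e^{-\alpha t}$) into a diffusion $\widehat B$ on $(0,\infty)$ with drift $\sqrt{2\alpha+\rho^2}\,\coth(x\sqrt{2\alpha+\rho^2})$, tending to $+\infty$ almost surely. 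Factoring the linear piece out of the Feynman--Kac identity gives
\begin{equation*}
e^{(\rho+\sqrt{2\alpha+\rho^2})y}u_y(x)=\bigl(e^{(\rho+\sqrt{2\alpha+\rho^2})y}h_y(x)\bigr)\cdot\widehat{\mathbf E}_x^{\,y}\!\left[\exp\!\Bigl(-\!\int_0^{\tau_y}\varphi(u_y(\widehat B_s))\,\mathrm{d}s\Bigr)\right],
\end{equation*}
where $\widehat{\mathbf P}_x^{\,y}$ is the corresponding $h_y$-transform of $\mathbf{P}_x^{-\rho}$, converging to $\widehat{\mathbf P}_x$ on finite horizons as $y\to\infty$.

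It thus remains to show that the $\widehat{\mathbf E}_x^{\,y}$-expectation above converges to a positive, finite constant $C_*(\rho)$ that does \emph{not} depend on $x$. The upper bound by $1$ is immediate since $\varphi\geq 0$. The essential task is the strictly positive limit: the integrand $\varphi(u_y(\widehat B_s))$ is exponentially small when $\widehat B_s$ is far below $y$ (by the trivial bound $u_y\leq h_y$ together with $\varphi(0)=0$), so the main contribution concentrates in an $O(1)$ window before $\tau_y$. To exploit this I would work in the reversed coordinate $\widetilde u_y(\xi):=u_y(y-\xi)$, which satisfies $\tfrac12 \widetilde u_y''+\rho\widetilde u_y'=\Phi(\widetilde u_y)$ with $\widetilde u_y(0)=1$, and show that $\widetilde u_y\to u^*$ uniformly on compacts, where $u^*$ is a traveling-wave profile on $(0,\infty)$ with $u^*(0)=1$, $u^*(\infty)=0$ and exponential decay rate $\rho+\sqrt{2\alpha+\rho^2}$. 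Combining this with a time-reversal from $\tau_y$ and the strong Markov property applied at an intermediate large level $M\uparrow\infty$ (which uses the asymptotic translation invariance of $\widehat B$ at infinity to collapse the $x$-dependence) identifies $C_*(\rho)\in(0,1]$ in the limit. The \emph{main obstacle} is obtaining uniform-in-$y$ tightness of the integral $\int_0^{\tau_y}\varphi(u_y(\widehat B_s))\,\mathrm{d}s$, needed to interchange limit and expectation; here the $L\log L$ assumption \eqref{LLogL-moment-condition} is decisive through its equivalent integrability form \eqref{Integral-of-phi-is-finite}, which furnishes the analogous integrability along the Galton--Watson trajectory $g(\cdot)$ and should transfer, via a many-to-one argument and the $\phi$-transform, to the integral along $\widehat B$.
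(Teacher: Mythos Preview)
Your setup is correct and coincides with the paper's: the Feynman--Kac representation
\[
v(x,y)=\mathbf{E}_x^{-\rho}\!\left[1_{\{\tau_y<\tau_0\}}\,e^{-\alpha\tau_y-\int_0^{\tau_y}\varphi(v(B_s,y))\,\mathrm{d}s}\right],
\]
the identification of $\phi(x)=e^{\rho x}\sinh(x\sqrt{2\alpha+\rho^2})$ as the harmonic function for the linearised operator, and the fact that $h_y(x)=\phi(x)/\phi(y)$ captures both the exponential rate and the $x$-profile. From here, however, your route and the paper's diverge.

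The paper does \emph{not} use a Doob $\phi$-transform, a traveling-wave profile $u^*$, or a time-reversal at $\tau_y$. Instead it fixes $\gamma\in(0,1)$ and applies the strong Markov property at the intermediate level $y-y^{\gamma}$, factorising (after passing to the Bessel-3 picture via Lemma~\ref{lemma_Bessel_driftBM})
\[
v(x,y)=\frac{x}{y}\,e^{\rho(x-y)}\,A_1(x,y)\,A_2(y),
\]
with $A_1$ the expectation over $[0,\tau_{y-y^\gamma}]$ and $A_2$ the expectation over $[\tau_{y-y^\gamma},\tau_y]$. For $A_1$ one rewrites under $\mathbf{P}_x^{\sqrt{2\alpha+\rho^2}}$ and uses the crude bound $v(\cdot,y)\le e^{c_*(\cdot-y)}$ together with Lemma~\ref{upper-k} (the $L\log L$ criterion in the form $\int_0^\infty\varphi(e^{-ct})\,\mathrm{d}t<\infty$) to show that the $\varphi$-integral on $[0,\tau_{y-y^\gamma}]$ vanishes as $y\to\infty$; this yields the $\sinh$ factor directly via \eqref{Scale-function}. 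For $A_2$, the paper observes that
\[
G(y):=\mathbf{E}_{y-y^\gamma}^{\sqrt{2\alpha+\rho^2}}\!\left[e^{-\int_0^{\tau_y}\varphi(v(B_s,y))\,\mathrm{d}s}\right]
\]
is \emph{monotone decreasing} in $y$, which follows from the elementary inequality $v(x+w,y+w)\ge v(x,y)$; hence the limit $C_*(\rho)=\lim_{y\to\infty}G(y)$ exists, and its positivity comes from one more application of Lemma~\ref{upper-k}.

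What this buys relative to your plan: the split at $y-y^\gamma$ makes the $x$-independence of the constant automatic (it lives entirely in $A_2$), and the monotonicity argument for $G$ replaces your proposed convergence $\widetilde u_y\to u^*$ on compacts plus the time-reversal/translation-invariance step. Your approach is conceptually appealing and should succeed, but it requires establishing the boundary-layer limit $u^*$ and a rigorous limit for the time-reversed process over an unbounded horizon $\tau_y\to\infty$---both honest pieces of work that the paper entirely sidesteps. The paper's use of $L\log L$ is also more explicit than the many-to-one transfer you sketch: it enters only through the bound $\int_0^\infty\varphi(e^{-ct})\,\mathrm{d}t<\infty$ applied to the drift-$\sqrt{2\alpha+\rho^2}$ Brownian path, with no reference back to the Galton--Watson survival curve $g$.
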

 \begin{remark}
On $\{\widetilde{M}^{-\rho}> y\}$, there is at least one particle which achieves the level $y$ before hitting $0$.
The reason
for the appearance of  the $\sinh$ function
in the theorem above is that
this
function is related to  the Laplace transformation of $\tau_y$ on the event $\{\tau_y<\tau_0\}$ and this event gives the main contribution to the tail probability of $\{\widetilde{M}^{-\rho}> y\}$.
 \end{remark}

Our third main result is on the limit behavior of the maximal position at time $t$.

 \begin{theorem}\label{thm3}
Suppose that \eqref{LLogL-moment-condition} holds and $x>0$.
	\begin{enumerate}
		\item[(i)] For $\rho =0$ and $z\ge 0$,
		\begin{align}
			\lim_{t\to\infty} \sqrt{t}e^{\alpha t}\P_x\left(\widetilde{M}_t^{-\rho}> \sqrt{t}z\right)=\sqrt{\frac{2}{\pi}}C_{sub} xe^{-z^2/2},
		\end{align}
		or equivalently, as $t\to\infty$,
		\begin{align}
			\P_x\left(\widetilde{M}_t^{-\rho}> \sqrt{t}z\right)\sim C_{sub}
			\E_x\left(\widetilde{Z}_t^{-\rho}	\left( (\sqrt{t}z,\infty) \right)\right).
		\end{align}
		\item[(ii)] For $\rho<0$ and $z\in \mathbb{R}$,
		\begin{align}
			\lim_{t\to\infty} e^{\alpha t}\P_x\left(\widetilde{M}_t^{-\rho}+\rho t> \sqrt{t}z\right)=\frac{C_{sub}(1-e^{2\rho x})}{\sqrt{2\pi}}\int_{z}^{\infty}e^{-\frac{y^2}{2}} \mathrm{d} y,
		\end{align}
		or equivalently, as $t\to\infty$,
		\begin{align}
			\P_x\left(\widetilde{M}_t^{-\rho}+\rho t> \sqrt{t}z\right)\sim C_{sub}
            \E_x\left(	\widetilde{Z}_t^{-\rho}	\left( (\sqrt{t}z-\rho t,\infty)\right)\right).
		\end{align}
		\item[(iii)] For $\rho>0$	and $z\geq 0$,
		\begin{align}
			\lim_{t\to\infty}t^{\frac{3}{2}} e^{(\alpha+\frac{\rho^2}{2})t}\P_x\left(\widetilde{M}_t^{-\rho}> z\right)=\sqrt{\frac{2}{\pi}}C_z(\rho)xe^{\rho x},
		\end{align}
	where
	$C_z(\rho):=\lim_{N\to\infty}	 e^{(\alpha+\frac{\rho^2}{2})N}	\int_{0}^{\infty}ye^{-\rho y} \P_y\left(\widetilde{M}_N^{-\rho}> z\right) \mathrm{d}y\in (0,\infty)$
	is a function of $z$
	independent of $x$.	Or equivalently, as $t\to\infty$,
	%\begin{align}
	\begin{align}\label{e:thrm1.3iiib}
		\P_x\left(\widetilde{M}_t^{-\rho}> z\right)\sim
		\frac{\rho^2 C_z(\rho)e^{\rho z}}{\rho z+1}
		 \E_x\left(	\widetilde{Z}_t^{-\rho}	\left( (z,\infty)\right)\right).
	\end{align}
	\end{enumerate}
 \end{theorem}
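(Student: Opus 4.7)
The plan is to adapt the Feynman-Kac approach developed for Theorem \ref{thm1}. For a fixed target level $L>0$ define $u_s^L(x):=\P_x(\widetilde{M}_s^{-\rho}>L)$; by conditioning on the first branching time, $u_s^L$ satisfies the semilinear equation
\begin{equation*}
\partial_s u=\tfrac{1}{2}u_{xx}-\rho u_x-(\alpha+\varphi(u))u\ \text{on}\ (0,\infty),\quad u_s^L(0)=0,\quad u_0^L=\1_{(L,\infty)},
\end{equation*}
which differs from the equation governing $\P_x(\widetilde{\zeta}^{-\rho}>s)$ only in the initial datum. Treating $\alpha+\varphi(u_\cdot^L(\cdot))$ as a path-dependent potential yields the nonlinear Feynman-Kac identity
\begin{equation*}
u_t^L(x)=e^{-\alpha t}\,\mathbf{E}_x^{-\rho}\!\left[\1_{\{B_t>L,\,\tau_0>t\}}\exp\!\Big(-\!\int_0^t\varphi(u_{t-s}^L(B_s))\,\d s\Big)\right],
\end{equation*}
which I would apply with $L=z(t,\rho)$.

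For cases (i) and (ii) (i.e.\ $\rho\leq 0$) I would first use the reflection principle (combined with Girsanov's formula when $\rho<0$) to derive the sharp Brownian asymptotics $\sqrt{t}\,\mathbf{P}_x^0(B_t>\sqrt{t}z,\tau_0>t)\to\sqrt{2/\pi}\,x e^{-z^2/2}$ and $\mathbf{P}_x^{-\rho}(B_t+\rho t>\sqrt{t}z,\tau_0>t)\to (1-e^{2\rho x})\int_z^\infty \tfrac{e^{-y^2/2}}{\sqrt{2\pi}}\,\d y$. It then remains to show that, under these conditioning events, the Feynman-Kac exponential tends to $C_{sub}$. After the substitution $r=t-s$, its exponent is $\int_0^t\varphi(u_r^L(B_{t-r}))\,\d r$; conditionally on $\{B_t>L,\tau_0>t\}$ the position $B_{t-r}$ is typically of order $\sqrt{t}$, hence so large that $u_r^L(B_{t-r})$ is essentially the un-killed survival probability $\P_0(\zeta>r)=g(r)$. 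A dominated convergence argument based on the crude bound $u_r^L(y)\leq g(r)$, monotonicity of $\varphi$, and the integrability \eqref{Integral-of-phi-is-finite}, then produces $\int_0^\infty\varphi(g(r))\,\d r$ and, via \eqref{Constant-C-sub}, the factor $C_{sub}$. The equivalent formulations involving $C_{sub}\,\E_x(\Z_t^{-\rho}(\cdot))$ are immediate from the many-to-one identity $\E_x(\Z_t^{-\rho}(A))=e^{-\alpha t}\mathbf{P}_x^{-\rho}(B_t\in A,\tau_0>t)$.

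For case (iii) with $\rho>0$ the Feynman-Kac route does not give a clean limit, because the conditioned Brownian motion stays at bounded heights and $u_r^L(B_{t-r})$ no longer converges to $g(r)$; moreover the Brownian asymptotic carries an extra $t^{-3/2}$ factor. Instead I would mimic Theorem \ref{thm1}(iii) by a first-moment argument at a fixed large time $N$. The branching Markov property together with $1-\prod_i(1-a_i)\leq \sum_i a_i$ and the many-to-one formula yield
\begin{equation*}
u_t(x)\leq e^{-\alpha(t-N)}\!\int_0^\infty u_N(y)\,p_{t-N}^{-\rho,0}(x,y)\,\d y,
\end{equation*}
where $p_s^{-\rho,0}(x,y)\sim \sqrt{2/\pi}\,xy\,s^{-3/2}e^{-\rho(y-x)-\rho^2 s/2}$ as $s\to\infty$ for fixed $x,y>0$. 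Multiplying by $t^{3/2}e^{(\alpha+\rho^2/2)t}$ and letting $t\to\infty$ gives
\begin{equation*}
\limsup_{t\to\infty}t^{3/2}e^{(\alpha+\rho^2/2)t}u_t(x)\leq \sqrt{2/\pi}\,x e^{\rho x}\,e^{(\alpha+\rho^2/2)N}\!\int_0^\infty y e^{-\rho y} u_N(y)\,\d y.
\end{equation*}
The matching lower bound comes from $1-\prod_i(1-a_i)\geq \sum_i a_i-\tfrac{1}{2}(\sum_i a_i)^2$ together with a second-moment bound for subcritical BBM showing that the quadratic correction contributes only at order $t^{-3}$. Sending $N\to\infty$ defines $C_z(\rho)$; substituting $\int_z^\infty y e^{-\rho y}\,\d y=(\rho z+1)e^{-\rho z}/\rho^2$ then produces \eqref{e:thrm1.3iiib}.

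The main technical obstacle is regime-dependent. In cases (i)--(ii) it is to produce an $r$-integrable majorant of $\varphi(u_r^L(B_{t-r}))$ valid uniformly in $(t,L)$ so that dominated convergence applies; this rests crucially on \eqref{LLogL-moment-condition} through \eqref{Integral-of-phi-is-finite}. In case (iii) the subtle point is the very existence of $C_z(\rho)=\lim_N e^{(\alpha+\rho^2/2)N}\int_0^\infty ye^{-\rho y}u_N(y)\,\d y$ in $(0,\infty)$: the pointwise asymptotic of $u_N$ alone would produce a non-integrable envelope $\sim y^2$, so one must instead establish a uniform-in-$N$ Gaussian-type tail bound on $y\mapsto u_N(y)$ and argue convergence by a monotonicity or Cauchy argument obtained by iterating the Markov decomposition.
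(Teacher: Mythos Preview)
Your Feynman--Kac set-up and the treatment of the lower bound in cases (i)--(ii) match the paper. The upper bound, however, does not follow from dominated convergence with the majorant $\varphi(g(r))$ alone: that bound controls $\varphi(u_r^L(B_{t-r}))$ from \emph{above}, which is exactly what produces the lower bound on $Q_z$. For the upper bound on $Q_z$ you need $u_r^L(B_{t-r})$ to be close to $g(r)$ from \emph{below}, and this requires an argument. The paper restricts the exponent to $r\in[0,N]$, replaces $B_{t-r}$ by $\inf_{s\in[t-N,t]}B_s$ (using monotonicity of $Q_z(\cdot,r)$), and splits according to whether this infimum exceeds $z_t+t^\gamma$; on the good event one gets an explicit lower bound $Q_{z_t}(z_t+t^\gamma,r)\geq g(r)-o(1)$ by comparison with unkilled BBM, while the bad event has negligible probability. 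Your sketch does not supply this step.

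The genuine gap is in case (iii). Your proposed lower bound via $1-\prod(1-a_i)\geq \sum a_i-\tfrac12(\sum a_i)^2$ requires controlling $\E_x\big[\big(\sum_{u\in\widetilde N_{t-N}^{-\rho}}u_N(X_u)\big)^2\big]$, and the off-diagonal part of this second moment brings in $f''(1)=\sum k(k-1)p_k$, which is \emph{not} assumed finite under \eqref{LLogL-moment-condition}. Contrary to your assessment, the Feynman--Kac route \emph{does} give a clean limit for $\rho>0$: keep the integral over $[t-N,t]$ exactly (Markov at time $t-N$ turns it into $f_N^z(B_{t-N})=e^{\alpha N}\P_{B_{t-N}}(\widetilde M_N^{-\rho}>z)$, then apply the Yaglom limit for killed drifted Brownian motion), and on $[0,t-N]$ bound $\varphi(Q_z(B_s,t-s))\leq \varphi(g(t-s))$ to get a multiplicative error $e^{-\int_N^\infty\varphi(g(s))\,\d s}\to 1$. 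This gives upper and lower bounds with the \emph{same} main term $\sqrt{2/\pi}\,xe^{\rho x}\,e^{(\alpha+\rho^2/2)N}\int_0^\infty ye^{-\rho y}\P_y(\widetilde M_N^{-\rho}>z)\,\d y$, so the existence of $C_z(\rho)$ drops out of the sandwich automatically---no separate monotonicity or tail-bound argument is needed.
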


Combining Theorems \ref{thm1} and  \ref{thm3}, we get the following Yaglom-type theorem:
 \begin{corollary}\label{cor1}
	Suppose that \eqref{LLogL-moment-condition} holds and $x>0$.
	\begin{enumerate}
	\item[(i)]
	If $\rho =0$, then
 	\begin{align}
 		\P_x\left(\frac{\widetilde{M}_t^{-\rho}}{\sqrt{t}}\in \cdot \big| \widetilde{\zeta}^{-\rho}>t \right) \quad \stackrel{\mathrm{d}}{\Longrightarrow} \quad \P(R\in \cdot),
 	\end{align}
 	where $(R, \P)$ is a Rayleigh distribution with density $ze^{-z^2/2}1_{\{z>0\}}$.

     \item[(ii)]
     If $\rho<0$, then
     \begin{align}
     	\P_x\left(\frac{\widetilde{M}_t^{-\rho}+\rho t}{\sqrt{t}}\in \cdot \big| \widetilde{\zeta}^{-\rho}>t \right) \quad \stackrel{\mathrm{d}}{\Longrightarrow} \quad \mathbf{P}_0(B_1\in \cdot),
     \end{align}
     where $(B_1, \mathbf{P}_0)$
       is a standard normal distribution.

     \item[(iii)]
     If $\rho>0$, then there exists a random variable $(X,\P)$  whose law is independent of $x$ such that
       \begin{align}
     	\P_x\left(\widetilde{M}_t^{-\rho}\in \cdot \big| \widetilde{\zeta}^{-\rho}>t \right) \quad \stackrel{\mathrm{d}}{\Longrightarrow} \quad\P (X\in \cdot).
     \end{align}
     \end{enumerate}
 \end{corollary}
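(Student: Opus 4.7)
The plan is to express the conditional tail as the ratio
\[
\P_x\bigl(\widetilde{M}_t^{-\rho}>a(t,z)\,\big|\,\widetilde{\zeta}^{-\rho}>t\bigr)
= \frac{\P_x(\widetilde{M}_t^{-\rho}>a(t,z))}{\P_x(\widetilde{\zeta}^{-\rho}>t)},
\]
with $a(t,z)=\sqrt{t}\,z$ in (i), $a(t,z)=\sqrt{t}\,z-\rho t$ in (ii), and $a(t,z)=z$ in (iii), and to let $t\to\infty$ using Theorems \ref{thm1} and \ref{thm3}. In each case the normalizing prefactors ($\sqrt{t}\,e^{\alpha t}$, $e^{\alpha t}$, and $t^{3/2}e^{(\alpha+\rho^2/2)t}$, respectively) together with the $x$-dependent factors match between numerator and denominator, so they cancel, leaving an explicit limit for the conditional tail.

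For case (i), that limit is $e^{-z^2/2}$, $z\ge 0$, which is the survival function of the Rayleigh law with density $ze^{-z^2/2}\mathbf{1}_{\{z>0\}}$. Since $\widetilde{M}_t^{-\rho}\ge 0$ on $\{\widetilde{\zeta}^{-\rho}>t\}$ and the Rayleigh law is continuous on $[0,\infty)$, pointwise convergence of tails at every $z\ge 0$ already yields the claimed weak convergence. Case (ii) is analogous: the ratio converges, for every $z\in\R$, to the standard normal survival function $\frac{1}{\sqrt{2\pi}}\int_z^{\infty}e^{-y^2/2}\,\d y$, and the same reasoning applies.

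Case (iii) requires one extra step. The ratio converges, for every $z\ge 0$, to
\[
G(z):=\frac{C_z(\rho)}{C_0(\rho)},
\]
which is visibly independent of $x$. Non-increasingness of $G$ and the normalization $G(0)=1$ are immediate from the definition of $C_z(\rho)$. Extending $G$ by $1$ on $(-\infty,0)$, it is the survival function of a probability distribution on $[0,\infty)$ as soon as $\lim_{z\to\infty}G(z)=0$; granted this, monotonicity forces $G$ to be continuous off a countable set, and pointwise tail convergence at continuity points yields weak convergence to a random variable $X$ whose law is encoded by $G$.

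The main obstacle is therefore the tightness step in (iii), i.e., showing $C_z(\rho)\to 0$ as $z\to\infty$. The natural strategy is to interchange the $z\to\infty$ and $N\to\infty$ limits in
\[
C_z(\rho)=\lim_{N\to\infty}e^{(\alpha+\rho^2/2)N}\int_0^{\infty}y e^{-\rho y}\,\P_y\bigl(\widetilde{M}_N^{-\rho}>z\bigr)\,\d y,
\]
which would reduce the claim to the (easy) fact that $\P_y(\widetilde{M}^{-\rho}>z)\to 0$. The nontrivial part is producing a uniform-in-$N$ dominator. A plausible candidate is the composite bound $\P_y(\widetilde{M}_N^{-\rho}>z)\le \P_y(\widetilde{M}^{-\rho}>z)\wedge \P_y(\widetilde{\zeta}^{-\rho}>N)$ together with the explicit exponential decay of $\P_y(\widetilde{M}^{-\rho}>z)$ furnished by Theorem \ref{thm2}. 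Alternatively, one can argue probabilistically: for $\rho>0$ the positive drift pushes surviving particles toward the killing boundary, so that the conditional family $\{\widetilde{M}_t^{-\rho}\mid \widetilde{\zeta}^{-\rho}>t\}_{t\ge 1}$ is tight; this route constitutes the principal technical content of the proof, with the rest amounting to bookkeeping.
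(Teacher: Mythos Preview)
Your treatment of parts (i) and (ii) is correct and coincides with the paper's (implicit) argument; the paper only writes out (iii), taking (i) and (ii) as immediate from Theorems \ref{thm1} and \ref{thm3}.

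For part (iii) your outline is also right: the conditional tail converges to $G(z)=C_z(\rho)/C_0(\rho)$, and the sole issue is tightness, i.e.\ $C_z(\rho)\to 0$. But the two routes you sketch for this are not carried out, and the dominated-convergence one is awkward to close: neither $\P_y(\widetilde M_N^{-\rho}>z)\le \P_y(\widetilde M^{-\rho}>z)$ nor $\P_y(\widetilde M_N^{-\rho}>z)\le \P_y(\widetilde\zeta^{-\rho}>N)$ on its own handles both the growing prefactor $e^{(\alpha+\rho^2/2)N}$ and the $z$-decay, and you have not shown how their minimum does.

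The paper makes your ``probabilistic'' intuition precise in one line, and in doing so shows that there is no ``principal technical content'' here at all. From the Feynman--Kac representation one has $Q_z(x,t)\le e^{-\alpha t}\mathbf{P}_x^{-\rho}(\tau_0>t,\,B_t>z)$ (this is \eqref{upper_u_rho>0} with $N=t$) and $u(x,t)\ge C_{sub}\,e^{-\alpha t}\mathbf{P}_x^{-\rho}(\tau_0>t)$ (this is \eqref{lower-u-x-t} with $z=0$). Dividing,
\[
\P_x\bigl(\widetilde M_t^{-\rho}>z\,\big|\,\widetilde\zeta^{-\rho}>t\bigr)\ \le\ \frac{1}{C_{sub}}\,\mathbf{P}_x^{-\rho}\bigl(B_t>z\,\big|\,\tau_0>t\bigr),
\]
so tightness of $\widetilde M_t^{-\rho}$ is inherited from the single-particle conditional law, whose limit $\rho^2\int_z^\infty ye^{-\rho y}\,\d y$ is Lemma \ref{lemma_tau>t_with_drift}(iii). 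Equivalently, the bound $C_z(\rho)\le \int_z^\infty ye^{-\rho y}\,\d y$ was already recorded in the proof of Theorem \ref{thm3}(iii) (the line after \eqref{upper_bd_limsup_u}), and this by itself gives $C_z(\rho)\to 0$. Either way, the tightness step you flag as the heart of the proof is essentially free from estimates already on the page.
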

\begin{remark}
	Compared with \cite[Theorem 3]{Lalley15} in the case of
	critical branching random walks, for $\rho \leq 0$, the weak limit of $\widetilde{M}_t^{-\rho}$ conditioned on survival up to time $t$ is simpler. The limit in \cite[Theorem 3]{Lalley15} is related to the maximum of a measure-valued process (see \cite[Corollary 4]{Lalley15}).
\end{remark}

\begin{remark}
	It is natural to study similar problems for subcritical
	branching killed L\'{e}vy processes.
	However, in the general case, even when the spatial motion is spectrally negative, some of the main ingredients, such as Lemma \ref{lemma_tau>t_with_drift}, are much more difficult.
	So, to avoid technical details, we	concentrate on the case of subcritical branching killed Brownian motion with drift.
\end{remark}

{\bf Organization of the paper:}
The rest of the paper is organized as follows. In Section  \ref{subsection_BM}, we first give some results on Brownian motion and the 3-dimensional Bessel process
that will be used in the proofs of our main results. Then we recall
some connections between the one-sided F-KPP equation and
our model in Section \ref{subsection_oneside_FKPP}.
The proofs of Theorems \ref{thm1} and \ref{thm3} are given in Section \ref{proof of thm1}
and the proof of Theorem \ref{thm2} is given in Section \ref{proof of thm2}.

\section{Preliminaries}\label{Preliminary}

 \subsection{Some useful properties of Brownian motion}\label{subsection_BM}

Let $(B_t,\mathbf{P}_x)$ be a standard Brownian motion
starting from $x$. For any $\rho\in \R$, it is known that
$\{e^{-\rho (B_t-x)-\frac{\rho^2}{2}t},t\ge0\}$ is a positive $\mathbf{P}_x$-martingale with mean $1$.
Define $\mathcal{F}_t:=\sigma(B_s:s\le t)$ and
\begin{align}\label{change_mea_BMwith_drift}
	\frac{\mathrm{d}\mathbf{P}_x^{-\rho} }{\mathrm{d}\mathbf{P}_x}\Big|_{
	\mathcal{F}_t}	:=e^{-\rho (B_t-x)-\frac{\rho^2}{2}t}.
\end{align}
Then under $\mathbf{P}_x^{-\rho}$, $\{B_t,t\ge 0\}$ is a Brownian motion with drift $-\rho$ starting from $x$.  For any $z\in \R$, define $\tau_z:=\inf\{t> 0:B_t=z\}$. Note that for any $x>0$, under $\mathbf{P}_x$, $\frac{B_t}{x}1_{\{\tau_0>t\}}$ is a 
positive martingale of mean $1$.  Define
\begin{align}\label{change_mea}
	\frac{\mathrm{d}\mathbf{P}_x^B }{\mathrm{d}\mathbf{P}_x}\Big|_{
	\mathcal{F}_t}	:=\frac{B_t}{x}1_{\{\tau_0>t\}}=\frac{B_t}{x}1_{\{\min_{s\le t}B_s>0\}}.
\end{align}
It is well-known that $(B_t, \mathbf{P}_x^B)$ is a $3$-dimensional Bessel process with transition probability density $p_t^B(x,y)$ given by
\begin{align}
	p_t^B(x,y):=
	\frac{y}{x}\frac{1}{\sqrt{2\pi t}}e^{-\frac{(y-x)^2}{2t}} \left(1-e^{-\frac{-2xy}{t}}\right)1_{\{y>0\}}.
\end{align}

The following result gives the asymptotic behavior of
$\mathbf{P}_x^{-\rho}(\tau_0>t, B_t>z(t,\rho))$  as $t\to\infty$ where $z(t,\rho)= \sqrt{t}z-\rho t$ for $\rho\leq 0$ and $z(t,\rho)=z$ for $\rho>0$.
For the case $\rho<0$, see \cite[page 30]{Borodin02} and for the case $\rho>0$,
see \cite[(7) and Lemma 3.1]{Louidor20}. The case for $\rho=0$ is easy to deal with using \eqref{change_mea}, so we omit the proof.

\begin{lemma}\label{lemma_tau>t_with_drift}
	Let $x>0$.
	\begin{enumerate}
		\item[(i)]	If $\rho=0$, then for any $z\geq 0$, we have
		\begin{align}
			\lim_{t\to \infty} \sqrt{t}\mathbf{P}_x(\tau_0>t, B_t>\sqrt{t}z)=\sqrt{\frac{2}{\pi}} x e^{-\frac{z^2}{2}}.
		\end{align}
		\item[(ii)]	 If $\rho <0$, then
		\begin{align}
			\lim_{t\to\infty}\mathbf{P}_x^{-\rho}(\tau_0>t)=1-e^{2\rho x}.
		\end{align}
		Also, for any $z\in \R$,
		\begin{align}
				\lim_{t\to\infty}\mathbf{P}_x^{-\rho}\left(\tau_0>t,B_t+\rho t>\sqrt{t}z\right)
			=\frac{(1-e^{2\rho x})}{\sqrt{2\pi}}\int_{z}^{\infty}e^{-\frac{y^2}{2}} \mathrm{d} y.
		\end{align}
		\item[(iii)]	If $\rho >0$, then for any $z\geq 0$,
		\begin{align}\label{asymptotic_tua>t_rho>0}
			\lim_{t\to\infty} t^{\frac{3}{2}}e^{\frac{\rho^2}{2}t}\mathbf{P}_x^{-\rho}(\tau_0>t, B_t>z)
			=\sqrt{\frac{2}{\pi}} xe^{\rho x}\int_{z}^{\infty}ye^{-\rho y} \mathrm{d}y.
		\end{align}
		Consequently, for any $A\subset (0,\infty)$ with $|\partial A|=0$,
		\[
			\lim_{t\to\infty}\mathbf{P}_x^{-\rho} (B_t\in A |\tau_0>t)= \rho^2 \int_A ye^{-\rho y} \mathrm{d}y.
		\]
	\end{enumerate}
\end{lemma}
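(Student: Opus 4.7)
The plan is to handle the three cases uniformly, starting from the reflection principle
$$\mathbf{P}_x(\tau_0 > t, B_t \in \mathrm{d}y) = \frac{1}{\sqrt{2\pi t}}\bigl(e^{-(y-x)^2/(2t)} - e^{-(y+x)^2/(2t)}\bigr)\mathrm{d}y,\quad y > 0,$$
and transferring computations from $\mathbf{P}_x$ to $\mathbf{P}_x^{-\rho}$ via Girsanov \eqref{change_mea_BMwith_drift}. For case (i) I would follow the authors' hint and use the Bessel change of measure \eqref{change_mea} to rewrite
$$\sqrt{t}\,\mathbf{P}_x(\tau_0>t, B_t>\sqrt{t}z) = \mathbf{E}_x^B\!\left[\frac{x\sqrt{t}}{B_t}\mathbf{1}_{\{B_t>\sqrt{t}z\}}\right];$$
from the explicit formula for $p_t^B(x,y)$ one sees that $B_t/\sqrt{t}$ converges in distribution under $\mathbf{P}_x^B$ to the Bessel-$3$ law at time $1$ from $0$, with density $\sqrt{2/\pi}\,u^2 e^{-u^2/2}$ on $(0,\infty)$, and a dominated-convergence argument (trivial when $z>0$ because the integrand is bounded by $x/z$, and by direct computation on the reflection density when $z=0$) closes the case.

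For case (ii) with $\rho<0$, the first limit is the classical gambler's-ruin identity for Brownian motion with positive drift $-\rho$ starting at $x$. For the joint probability, after Girsanov and the substitution $u=(y+\rho t)/\sqrt{t}$, collecting the exponents turns the integrand into
$$\frac{1}{\sqrt{2\pi}}\, e^{-u^2/2 - x^2/(2t)}\bigl(e^{xu/\sqrt{t}} - e^{2\rho x - xu/\sqrt{t}}\bigr),$$
which converges pointwise on $(z,\infty)$ to $(1-e^{2\rho x})e^{-u^2/2}/\sqrt{2\pi}$. Routine Gaussian-type bounds provide a uniform $L^1$ majorant, so dominated convergence gives the stated Gaussian integral.

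Case (iii) with $\rho>0$ is the most delicate, since the spatial motion is driven toward $0$ and the exponential factor $e^{-\rho B_t}$ concentrates the contribution of $B_t$ on a bounded range. Combining Girsanov with the reflection density gives
$$\mathbf{P}_x^{-\rho}(\tau_0>t, B_t>z) = \frac{e^{\rho x-\rho^2 t/2}}{\sqrt{2\pi t}}\int_z^\infty e^{-\rho y}\,e^{-(y-x)^2/(2t)}\bigl(1-e^{-2xy/t}\bigr)\mathrm{d}y.$$
Multiplying through by $t^{3/2}e^{\rho^2 t/2}$ and pulling a factor of $t$ inside the integrand leaves $\frac{e^{\rho x}}{\sqrt{2\pi}}e^{-\rho y}\,e^{-(y-x)^2/(2t)}\cdot t(1-e^{-2xy/t})$, which converges pointwise to $\frac{2xe^{\rho x}}{\sqrt{2\pi}}\,y\,e^{-\rho y}$. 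The hard part will be justifying the interchange of limit and integral; the elementary inequalities $t(1-e^{-2xy/t})\le 2xy$ and $e^{-(y-x)^2/(2t)}\le 1$ supply the uniform majorant $\frac{2xe^{\rho x}}{\sqrt{2\pi}}\,y\,e^{-\rho y}$, which is integrable on $(z,\infty)$ because $\rho>0$, so dominated convergence delivers \eqref{asymptotic_tua>t_rho>0}. The final conditional-distribution statement then follows by dividing the asymptotic for general $z\ge 0$ by the one at $z=0$.
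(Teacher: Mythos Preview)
Your argument is correct in all three cases. The paper does not actually prove this lemma: it cites \cite[p.~30]{Borodin02} for $\rho<0$ and \cite[(7) and Lemma~3.1]{Louidor20} for $\rho>0$, and for $\rho=0$ simply says the result is easy using the Bessel change of measure \eqref{change_mea} and omits the proof. Your treatment of case~(i) follows exactly the hint the authors give; your arguments for (ii) and (iii), by contrast, replace the external references with a self-contained computation via the reflection density combined with Girsanov and dominated convergence. This is a cleaner and more transparent route than looking up the cited sources, and the dominating functions you identify (Gaussian-type in (ii), and $\tfrac{2xe^{\rho x}}{\sqrt{2\pi}}\,y e^{-\rho y}$ in (iii)) are exactly right. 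The only point worth making explicit is that in the last sentence of (iii) the passage from half-lines $\{B_t>z\}$ to general continuity sets $A$ uses the Portmanteau theorem, since the limiting measure has density $\rho^2 y e^{-\rho y}$; this is routine but deserves a word.
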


In the following result, we give the asymptotic behaviors of
$\E_x\left( \widetilde{Z}_t^{-\rho}((0,\infty))\right)$ and $\E_x\left(\widetilde{Z}_t^{-\rho}\left((z(t,\rho),\infty)\right) \right)$ as $t\to\infty$.
\begin{lemma}\label{lemma-expectation-survival-number}
	Let $x>0$.
	\begin{enumerate}
		\item[(i)] If $\rho=0$,	then for any $z\geq 0$,
		\begin{align}
			\lim_{t\to\infty}\sqrt{t}e^{\alpha t}
						\E_x\left( \widetilde{Z}_t^{-\rho} ((\sqrt{t}z,\infty)) \right)
			=\sqrt{\frac{2}{\pi}}x e^{-\frac{z^2}{2}}.
		\end{align}
		\item[(ii)] If $\rho<0$, we have
		\begin{align}
			\lim_{t\to\infty}e^{\alpha t}\E_x
						\left( \widetilde{Z}_t^{-\rho}((0,\infty)) \right)=1-e^{2\rho x},
		\end{align}
		and for any $z\in \R$,
		\begin{align}
			\lim_{t\to\infty}e^{\alpha t}\E_x
					\left(\widetilde{Z}_t^{-\rho}((\sqrt{t}z-\rho t,\infty))\right)
			=\frac{1-e^{2\rho x}}{\sqrt{2\pi}}\int_{z}^{\infty}e^{-\frac{y^2}{2}}\mathrm{d}y.
		\end{align}
		\item[(iii)] If $\rho>0$, then for any $z\geq 0$,
		\begin{align}
			\lim_{t\to\infty}t^{3/2}e^{(\alpha+\frac{\rho^2}{2})t}\E_x
						\left(\widetilde{Z}_t^{-\rho}((z,\infty))\right)
			=\sqrt{\frac{2}{\pi}}x e^{\rho x} \int_{z}^{\infty}ye^{-\rho y} \mathrm{d}y
			= \frac{1}{\rho^2}\sqrt{\frac{2}{\pi}} xe^{\rho (x-z)} (\rho z+1).
		\end{align}
	\end{enumerate}	
\end{lemma}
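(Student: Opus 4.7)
The strategy is to reduce the expected mass computation to the killed Brownian motion quantities already controlled in Lemma \ref{lemma_tau>t_with_drift}, by means of the many-to-one lemma. Concretely, I would start from the identity
\begin{align}
\E_x\bigl(\widetilde{Z}_t^{-\rho}(A)\bigr)
= \E_x\Bigl(\sum_{u\in N_t^{-\rho}} \1_{\{\min_{s\le t}X_u(s)>0,\,X_u(t)\in A\}}\Bigr)
= e^{\beta(m-1)t}\,\mathbf{P}_x^{-\rho}\bigl(\tau_0>t,\,B_t\in A\bigr),
\end{align}
which is the standard many-to-one formula applied to a BBM with branching rate $\beta$, mean offspring $m$, and spatial motion $(B_t,\mathbf{P}_x^{-\rho})$, with the test function being the indicator of $A\cap\{\min_{s\le t}B_s>0\}$. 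Since $\alpha=\beta(1-m)$, the prefactor becomes $e^{-\alpha t}$, and so
\begin{align}
\E_x\bigl(\widetilde{Z}_t^{-\rho}(A)\bigr)= e^{-\alpha t}\,\mathbf{P}_x^{-\rho}\bigl(\tau_0>t,\,B_t\in A\bigr).
\end{align}

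With this identity in hand, each of the three statements is obtained by taking $A=(z(t,\rho),\infty)$, multiplying by the appropriate normalization, and invoking the corresponding part of Lemma \ref{lemma_tau>t_with_drift}. For (i), take $A=(\sqrt{t}z,\infty)$, multiply by $\sqrt{t}e^{\alpha t}$, and apply Lemma \ref{lemma_tau>t_with_drift}(i). For (ii), take $A=(\sqrt{t}z-\rho t,\infty)$, rewrite the event on the right as $\{\tau_0>t,\,B_t+\rho t>\sqrt{t}z\}$, multiply by $e^{\alpha t}$, and apply Lemma \ref{lemma_tau>t_with_drift}(ii). For (iii), take $A=(z,\infty)$, multiply by $t^{3/2}e^{(\alpha+\rho^2/2)t}$, and apply Lemma \ref{lemma_tau>t_with_drift}(iii). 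The second equality in (iii) is the elementary integration
\begin{align}
\int_z^\infty y e^{-\rho y}\,\d y = \frac{e^{-\rho z}(\rho z+1)}{\rho^2},
\end{align}
obtained by one integration by parts.

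There is essentially no obstacle: the whole proof is a one-line reduction via many-to-one followed by direct quotation of Lemma \ref{lemma_tau>t_with_drift}. The only point that deserves a brief verification is that the many-to-one formula applies to the killed process, which is immediate because killing amounts to inserting the indicator $\1_{\{\min_{s\le t}B_s>0\}}$ into the test function — a functional of the spine's path, to which many-to-one applies without change.
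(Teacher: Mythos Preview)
Your proposal is correct and matches the paper's proof essentially line for line: the paper invokes the many-to-one lemma to obtain $\E_x\bigl(\widetilde{Z}_t^{-\rho}((z(t,\rho),\infty))\bigr)=e^{-\alpha t}\mathbf{P}_x^{-\rho}(\tau_0>t,\,B_t>z(t,\rho))$ and then quotes Lemma~\ref{lemma_tau>t_with_drift} in each of the three cases, exactly as you do.
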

\begin{proof}
		For any bounded measurable function $F$,
		by the many-to-one lemma (see Hardy and Harris \cite[Theorem 2.8]{Hardy06}),
	we have
	\begin{align}\label{Many-to-one}
		\E_x\Big(\sum_{u\in N_t^{-\rho}} F(X_u(s), 0\leq s\leq t)\Big) = e^{-\alpha t}\mathbf{E}_x^{-\rho}\left( F(B_s, 0\leq s\leq t)\right),
	\end{align}
	which implies that
	\begin{align}
			 \E_x\left(\widetilde{Z}_t^{-\rho}((0,\infty))\right)
		=e^{-\alpha t}\mathbf{P}_x^{-\rho} (\tau_0>t)
	\end{align}
	and
	\begin{align}
			\E_x\left(\widetilde{Z}_t^{-\rho}((z(t,\rho),\infty))\right)
		=e^{-\alpha t}\mathbf{P}_x^{-\rho} (B_t>z(t,\rho), \tau_0>t).
	\end{align}
	Combining this with Lemma \ref{lemma_tau>t_with_drift},	we arrive at the desired result.
\end{proof}

For $x,y>0$, define $v(x,y):=\P_x(\widetilde{M}^{-\rho}>y)$.
Lemma \ref{lemma_Bessel_driftBM} below
will play an important role
in the proof of Theorem \ref{thm2}. To prove this result, 
%we prove two other lemmas first.
we give two elementary lemmas first. The proofs of these two lemmas are routine and we give
the details for completeness.

\begin{lemma}\label{lemma_Bessel_BM}
	For any $a\ge 0$, $0<x\le y$ and nonnegative Borel function $h$, we have
	\begin{align}
 \mathbf{E}_x\left(1_{\{\tau_y<\tau_0\}}e^{-a\tau_y-\int_{0}^{\tau_y}h(B_s) \mathrm{d}s}\right)=\frac{x}{y}\mathbf{E}_x^B\left(e^{-a\tau_y-\int_{0}^{\tau_y} h(B_s) \mathrm{d}s}\right).
	\end{align}
\end{lemma}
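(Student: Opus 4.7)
The plan is to read the identity as an instance of optional stopping applied to the positive $\mathbf{P}_x$-martingale $M_t:=\frac{B_t}{x}\mathbf{1}_{\{\tau_0>t\}}$ that defines $\mathbf{P}_x^B$ via \eqref{change_mea}. Since the expression $e^{-a s-\int_0^s h(B_r)\d r}$ is bounded by $1$ and the functional inside the expectation is $\mathcal{F}_{\tau_y}$-measurable, the whole computation reduces to justifying that we may replace $\tau_y\wedge n$ by $\tau_y$ in the limit $n\to\infty$ on both sides of the martingale change-of-measure formula.

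More concretely, I would first apply the Radon--Nikodym identity \eqref{change_mea} at the bounded stopping time $\tau_y\wedge n$. For the bounded $\mathcal{F}_{\tau_y\wedge n}$-measurable functional $F_n:=e^{-a(\tau_y\wedge n)-\int_0^{\tau_y\wedge n}h(B_s)\d s}$ this gives
\begin{equation*}
\mathbf{E}_x^B\left(F_n\right)=\mathbf{E}_x\left(\frac{B_{\tau_y\wedge n}}{x}\mathbf{1}_{\{\tau_0>\tau_y\wedge n\}}F_n\right).
\end{equation*}
Next I would let $n\to\infty$. On the left-hand side, under $\mathbf{P}_x^B$ the Bessel process is transient to $+\infty$, so $\tau_y<\infty$ $\mathbf{P}_x^B$-a.s.; since $0\le F_n\le 1$ and $F_n\to e^{-a\tau_y-\int_0^{\tau_y}h(B_s)\d s}$ pointwise, bounded convergence yields the desired right-hand side of the lemma divided by $x/y$. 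For the right-hand side under $\mathbf{P}_x$, on $\{\tau_y<\tau_0\}$ we have $\tau_y<\infty$, so for $n$ large $\tau_y\wedge n=\tau_y$, $\mathbf{1}_{\{\tau_0>\tau_y\wedge n\}}=1$ and $B_{\tau_y\wedge n}=y$, while on $\{\tau_0<\tau_y\}$ the indicator $\mathbf{1}_{\{\tau_0>\tau_y\wedge n\}}$ vanishes for $n>\tau_0$; up to a $\mathbf{P}_x$-null set these exhaust all cases by recurrence of standard Brownian motion. Since $\tfrac{B_{\tau_y\wedge n}}{x}\mathbf{1}_{\{\tau_0>\tau_y\wedge n\}}\le y/x$, a second application of bounded convergence produces $\tfrac{y}{x}\mathbf{E}_x(\mathbf{1}_{\{\tau_y<\tau_0\}}e^{-a\tau_y-\int_0^{\tau_y}h(B_s)\d s})$. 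Equating the two limits and multiplying by $x/y$ gives the lemma.

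The only point that needs any care is the passage to the limit $n\to\infty$ inside the martingale identity, which is why I would insist on working with $\tau_y\wedge n$ rather than directly with $\tau_y$; once the uniform bound $M_{t\wedge \tau_y}\le y/x$ and the upper bound $1$ on the exponential factor are in hand, bounded convergence removes the truncation on both sides simultaneously. No other structural difficulty arises, and the lemma follows.
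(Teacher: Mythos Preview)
Your proposal is correct and follows essentially the same route as the paper: both apply optional stopping to the $\mathbf{P}_x$-martingale $\frac{B_t}{x}\mathbf{1}_{\{\tau_0>t\}}$ that defines $\mathbf{P}_x^B$, truncate at a bounded time, and then pass to the limit by bounded/dominated convergence. The only cosmetic difference is that the paper applies the change of measure at a deterministic time $t$ and then conditions on $\mathcal{F}_{\tau_y\wedge t}$, whereas you invoke the Radon--Nikodym identity directly at the bounded stopping time $\tau_y\wedge n$; these amount to the same thing.
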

\begin{proof}
	Note that $\mathbf{P}_x^B(\tau_y=\infty)=0$ for any $x\le y$. Since $\mathcal{F}_{\tau_y\land t}\subset\mathcal{F}_{t}$, it follows from 	\eqref{change_mea}	 that
	\begin{align}
		&\mathbf{E}_x^B\left(e^{-a\tau_y-\int_{0}^{\tau_y} h(B_s)\mathrm{d}s} \right) =
		\lim_{t\to\infty}\mathbf{E}_x^B\left(
		e^{-a\tau_y-\int_{0}^{\tau_y} h(B_s)
		\mathrm{d}s}1_{\{\tau_y<t\}}\right)\\
		&=\lim_{t\to\infty}\mathbf{E}_x\left(\frac{B_t}{x}1_{\{\tau_0>t\}}	
		e^{-a\tau_y-\int_{0}^{\tau_y} h(B_s)	
		\mathrm{d}s}1_{\{\tau_y<t\}}\right)\\
		&=\lim_{t\to\infty}\mathbf{E}_x\left(
		e^{-a\tau_y-\int_{0}^{\tau_y} h(B_s)
		\mathrm{d}s}1_{\{\tau_y<t\}}\mathbf{E}_x\left(\frac{B_t}{x}1_{\{\tau_0>t\}}|\mathcal{F}_{\tau_y\land t}\right)\right).
	\end{align}
	Since $(\frac{B_t}{x}1_{\{\tau_0>t\}})_{t\ge 0}$ is a $\mathbf{P}_x$-martingale with respect to $(\mathcal{F}_t)_{t\ge 0}$, by the optional stopping theorem, we have
	\begin{align}
		\mathbf{E}_x	\left(\frac{B_t}{x}1_{\{\tau_0>t\}}|\mathcal{F}_{\tau_y\land t}\right)
		=\frac{B_{\tau_y\land t}}{x}1_{\{\tau_0>{\tau_y\land t}\}}.
	\end{align}
	It follows from the dominated convergence theorem that
	\begin{align}
		&\mathbf{E}_x^B\left(e^{-a\tau_y-\int_{0}^{\tau_y} h(B_s)		\mathrm{d}s}\right)
		=\frac{y}{x}\lim_{t\to\infty}\mathbf{E}_x\left( 1_{\{\tau_y<t,\tau_0>\tau_y\}}e^{-a\tau_y-\int_{0}^{\tau_y} h(B_s)
		\mathrm{d}s}\right)\\
		&=\frac{y}{x}\mathbf{E}_x\left(1_{\{\tau_0>\tau_y\}}e^{-a\tau_y-\int_{0}^{\tau_y} h(B_s)\mathrm{d}s}\right).
	\end{align}
	This completes the proof.
\end{proof}

\begin{lemma}\label{lemma_BM_driftBM}
	 For any $a\ge 0$, 	$0<x\le y$ and non-negative Borel function $h$, we have
	\begin{align}
		 \mathbf{E}_x^{-\rho}\left(1_{\{\tau_y<\tau_0\}}e^{-a\tau_y-\int_{0}^{\tau_y} h(B_s)\mathrm{d}s}\right)
		= e^{\rho (x-y)}	\mathbf{E}_x\left(1_{\{\tau_y<\tau_0\}}e^{-(a+\frac{\rho^2}{2}) \tau_y-\int_{0}^{\tau_y} h(B_s)\mathrm{d}s}\right).
	\end{align}
\end{lemma}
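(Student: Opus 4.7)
The plan is a direct application of the Girsanov-type change of measure \eqref{change_mea_BMwith_drift} combined with optional stopping at $\tau_y\wedge t$, followed by a passage to the limit $t\to\infty$. The key observation is that on the event $\{\tau_y<\tau_0\}$, the Brownian path is forced to hit $y$ before $0$, so $B_{\tau_y}=y$ exactly; once the martingale $e^{-\rho(B_t-x)-\rho^2 t/2}$ is projected onto the stopping time $\tau_y\wedge t$, the spatial exponent becomes the deterministic factor $e^{\rho(x-y)}$.

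First, I would truncate the expectation by writing
\begin{align}
\mathbf{E}_x^{-\rho}\!\left(1_{\{\tau_y<\tau_0\}}e^{-a\tau_y-\int_0^{\tau_y}h(B_s)\,\mathrm{d}s}\right)
=\lim_{t\to\infty}\mathbf{E}_x^{-\rho}\!\left(1_{\{\tau_y<\tau_0,\,\tau_y\le t\}}e^{-a\tau_y-\int_0^{\tau_y}h(B_s)\,\mathrm{d}s}\right),
\end{align}
which is legitimate by monotone convergence since the integrand is nonnegative and the event $\{\tau_y<\tau_0,\tau_y\le t\}$ increases to $\{\tau_y<\tau_0\}$ (under $\mathbf{P}_x^{-\rho}$ one has $\mathbf{P}_x^{-\rho}(\tau_y<\infty\text{ or }\tau_0<\infty)=1$). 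The event $\{\tau_y<\tau_0,\tau_y\le t\}$ together with the random variable $1_{\{\tau_y<\tau_0,\tau_y\le t\}}e^{-a\tau_y-\int_0^{\tau_y}h(B_s)\mathrm{d}s}$ is $\mathcal{F}_{\tau_y\wedge t}$-measurable, so applying \eqref{change_mea_BMwith_drift} and then conditioning on $\mathcal{F}_{\tau_y\wedge t}$ gives
\begin{align}
\mathbf{E}_x^{-\rho}\!\left(1_{\{\tau_y<\tau_0,\tau_y\le t\}}e^{-a\tau_y-\int_0^{\tau_y}h(B_s)\mathrm{d}s}\right)
=\mathbf{E}_x\!\left(e^{-\rho(B_{\tau_y\wedge t}-x)-\frac{\rho^2}{2}(\tau_y\wedge t)}1_{\{\tau_y<\tau_0,\tau_y\le t\}}e^{-a\tau_y-\int_0^{\tau_y}h(B_s)\mathrm{d}s}\right),
\end{align}
after using the optional stopping theorem for the bounded stopping time $\tau_y\wedge t$ applied to the positive martingale in \eqref{change_mea_BMwith_drift}.

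Next, on the event $\{\tau_y<\tau_0,\tau_y\le t\}$ one has $\tau_y\wedge t=\tau_y$ and $B_{\tau_y\wedge t}=B_{\tau_y}=y$ by continuity of Brownian paths and the definition of $\tau_y$; consequently the Radon--Nikodym factor reduces to $e^{\rho(x-y)-\rho^2\tau_y/2}$, which can be pulled out of the expectation and merged with the exponent. Taking $t\to\infty$ and invoking monotone convergence a second time yields
\begin{align}
\mathbf{E}_x^{-\rho}\!\left(1_{\{\tau_y<\tau_0\}}e^{-a\tau_y-\int_0^{\tau_y}h(B_s)\mathrm{d}s}\right)
=e^{\rho(x-y)}\mathbf{E}_x\!\left(1_{\{\tau_y<\tau_0\}}e^{-(a+\rho^2/2)\tau_y-\int_0^{\tau_y}h(B_s)\mathrm{d}s}\right),
\end{align}
which is the claimed identity.

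There is no real obstacle here: the only subtlety is making sure optional stopping is applied at the bounded stopping time $\tau_y\wedge t$ rather than at $\tau_y$ itself (since $\tau_y$ may be infinite under $\mathbf{P}_x^{-\rho}$ when $\rho>0$), and then letting $t\to\infty$ via monotone or dominated convergence. This mirrors the structure of the proof of Lemma \ref{lemma_Bessel_BM} given immediately above, with the exponential martingale of Brownian motion with drift playing the role of the harmonic function $B_t/x$.
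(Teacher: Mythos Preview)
Your proof is correct and uses the same core idea as the paper: apply the Girsanov change of measure \eqref{change_mea_BMwith_drift}, use optional stopping at the bounded stopping time $\tau_y\wedge t$, and let $t\to\infty$. Your organization is somewhat cleaner than the paper's: you work directly with the indicator $1_{\{\tau_y<\tau_0,\,\tau_y\le t\}}$, which handles all $a\ge 0$ at once, whereas the paper first establishes the identity without the indicator (for $a>0$, using $e^{-a\tau_y}1_{\{\tau_y=\infty\}}=0$), then with the complementary indicator $1_{\{\tau_y\ge\tau_0\}}$, subtracts, and finally treats $a=0$ by a separate limit $\theta\to 0^+$. The paper's longer route does have one payoff: the intermediate identity \eqref{expression_BM_driftBM} (no indicator) is recorded and later reused in \eqref{upper-bound-of-v}; your direct argument does not produce this as a byproduct, though it could easily be obtained the same way.
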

\begin{proof}
We deal with the case $a>0$ first. For $a>0$,  since $e^{-a\tau_y} 1_{\{\tau_y=\infty\}}=0$,	it follows from \eqref{change_mea_BMwith_drift} that
	\begin{align}\label{change_BM_driftBM}
		& \mathbf{E}_x^{-\rho}\left(e^{-a\tau_y-\int_{0}^{\tau_y} h(B_s)	\mathrm{d}s}\right) \\ &	=
		\lim_{t\to\infty}\mathbf{E}_x^{-\rho}\left(
		e^{-a\tau_y-\int_{0}^{\tau_y} h(B_s)
		\mathrm{d}s}1_{\{\tau_y<t\}}\right)\nonumber\\
		&=\lim_{t\to\infty}\mathbf{E}_x\left(	e^{-\rho(B_t-x)-\frac{\rho^2}{2}t}
		e^{-a\tau_y-\int_{0}^{\tau_y} h(B_s)
		\mathrm{d}s}1_{\{\tau_y<t\}}\right)\nonumber\\
		&=\lim_{t\to\infty}\mathbf{E}_x\left( e^{-a\tau_y-\int_{0}^{\tau_y} h(B_s)\mathrm{d}s}1_{\{\tau_y<t\}}\mathbf{E}_x\left(
		e^{-\rho(B_t-x)-\frac{\rho^2}{2}t}	|{\mathcal{F}_{\tau_y\land t}}\right)\right).\nonumber
	\end{align}
	Recall that  $(e^{-\rho(B_t-x)-\frac{\rho^2}{2}t})_{t\ge0}$	 is a $\mathbf{P}_x$-martingale with respect to $(\mathcal{F}_t)_{t\ge 0}$, 	 so by the optional stopping theorem, on $\{\tau_y <t\}$, we have
	\begin{align}
		\mathbf{E}_x\left( e^{-\rho(B_t-x)-\frac{\rho^2}{2}t}|{\mathcal{F}_{\tau_y\land t}}\right)		=e^{-\rho(B_{\tau_y\land t}-x)-\frac{\rho^2}{2}(\tau_y\land t)}	= e^{-\rho(y-x)-\frac{\rho^2}{2}\tau_y}.
	\end{align}
	Combining this with  \eqref{change_BM_driftBM}	and using the fact that $\mathbf{P}_x\left(\tau_y<\infty\right)=1$, we get
	\begin{align}\label{expression_BM_driftBM}
		    \mathbf{E}_x^{-\rho}\left( e^{-a \tau_y-\int_{0}^{\tau_y} h(B_s)\mathrm{d}s}	\right)		=e^{\rho(x-y)}
		\mathbf{E}_x\left( e^{-(a+\frac{\rho^2}{2})\tau_y-\int_{0}^{\tau_y} h(B_s)\mathrm{d}s}\right).
	\end{align}
	Similarly, for $a>0$, we have
	\begin{align}\label{change_BM_driftBM1}
		&\mathbf{E}_x^{-\rho}\left( 1_{\{\tau_y\ge \tau_0\}}e^{-a\tau_y-\int_{0}^{\tau_y} h(B_s)\mathrm{d}s}\right)	 \\
		&=\lim_{t\to\infty}\mathbf{E}_x^{-\rho}\left(
		1_{\{\tau_y\ge \tau_0\}}	e^{-a\tau_y-\int_{0}^{\tau_y} h(B_s)
		\mathrm{d}s}1_{\{\tau_y<t\}}\right)\nonumber\\
		&=\lim_{t\to\infty}\mathbf{E}_x\left( 	e^{-\rho (B_t-x)-\frac{\rho^2}{2}t}
		1_{\{\tau_y\ge \tau_0\}}
		e^{-a\tau_y-\int_{0}^{\tau_y}
		 h(B_s)\mathrm{d}s}1_{\{\tau_y<t\}}\right)\nonumber\\
		&=\lim_{t\to\infty}\mathbf{E}_x\left(
		1_{\{\tau_y\ge \tau_0\}}
		e^{-a\tau_y-\int_{0}^{\tau_y}	 h(B_s)
		\mathrm{d}s}1_{\{\tau_y<t\}}
		e^{-\rho(B_{\tau_y\land t}-x)-\frac{\rho^2}{2}(\tau_y\land t)}
		\right)\nonumber\\	&=	e^{\rho (x-y)}	\mathbf{E}_x\left( 1_{\{\tau_y \ge \tau_0\}}e^{-(a+\frac{\rho^2}{2})\tau_y-\int_{0}^{\tau_y} h(B_s)\mathrm{d}s}		\right),\nonumber
	\end{align}
	where in the last inequality we used $\mathbf{P}_x\left(\tau_y<\infty \right)=1$.
	Combining \eqref{expression_BM_driftBM} and \eqref{change_BM_driftBM1},  we arrive at the desired result for $a>0$.
		
	For the case $a=0$, by the dominated convergence theorem, we have
	\begin{align}
		&	\mathbf{E}_x^{-\rho}\left( 1_{\{\tau_y<\tau_0\}}e^{-\int_{0}^{\tau_y}
		h(B_s)\mathrm{d}s}\right)
		= \lim_{\theta \to0+} \mathbf{E}_x^{-\rho}\left( 1_{\{\tau_y<\tau_0\}}e^{ -\theta \tau_y -\int_{0}^{\tau_y}
		h(B_s)\mathrm{d}s}\right) \nonumber\\
		& =	\lim_{\theta\to0+}	e^{\rho(x-y)}	\mathbf{E}_x\left( 1_{\{\tau_y<\tau_0\}}e^{-(\theta +\frac{\rho^2}{2})\tau_y-\int_{0}^{\tau_y}
		h(B_s)\mathrm{d}s}\right)\nonumber\\
		& = e^{\rho(x-y)}	\mathbf{E}_x\left( 1_{\{\tau_y<\tau_0\}}e^{-\frac{\rho^2}{2}\tau_y-\int_{0}^{\tau_y}
		h(B_s)\mathrm{d}s}\right).
	\end{align}
	This completes the proof.
\end{proof}

Combining Lemmas \ref{lemma_Bessel_BM} and \ref{lemma_BM_driftBM}, we immediately get the following result.

\begin{lemma}\label{lemma_Bessel_driftBM}
	For any $a\ge 0$, $0<x\le y$ and nonnegative Borel
	function $h$, we have
	\begin{align}
		\mathbf{E}_x^{-\rho}\left( 1_{\{\tau_y<\tau_0\}}e^{-a\tau_y-\int_{0}^{\tau_y} h(B_s)\mathrm{d}s} \right)
		=\frac{x}{y}	e^{\rho(x-y)}	\mathbf{E}_x^B\left( e^{-(a+\frac{\rho^2}{2})\tau_y-\int_{0}^{\tau_y} h(B_s)\mathrm{d}s}\right).
	\end{align}
\end{lemma}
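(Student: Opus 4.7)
The plan is to prove Lemma \ref{lemma_Bessel_driftBM} by a direct two-step application of the two preceding lemmas, using Lemma \ref{lemma_BM_driftBM} to pass from the drifted Brownian motion to driftless Brownian motion (picking up the factor $e^{\rho(x-y)}$ and replacing $a$ by $a+\rho^2/2$), and then using Lemma \ref{lemma_Bessel_BM} to pass from the driftless Brownian motion killed at $0$ to the $3$-dimensional Bessel process (picking up the factor $x/y$).

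More precisely, I would first apply Lemma \ref{lemma_BM_driftBM} with the given $a\ge 0$ and $h$, so that
\[
\mathbf{E}_x^{-\rho}\!\left( 1_{\{\tau_y<\tau_0\}}e^{-a\tau_y-\int_{0}^{\tau_y} h(B_s)\mathrm{d}s} \right)
= e^{\rho(x-y)}\,\mathbf{E}_x\!\left( 1_{\{\tau_y<\tau_0\}}e^{-(a+\frac{\rho^2}{2})\tau_y-\int_{0}^{\tau_y} h(B_s)\mathrm{d}s}\right).
\]
Next I would apply Lemma \ref{lemma_Bessel_BM} to the inner expectation, with the nonnegative parameter $a+\tfrac{\rho^2}{2}$ in place of $a$ there, which is legitimate since $a\ge 0$ and $\rho\in\R$ force $a+\rho^2/2\ge 0$. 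This yields
\[
\mathbf{E}_x\!\left( 1_{\{\tau_y<\tau_0\}}e^{-(a+\frac{\rho^2}{2})\tau_y-\int_{0}^{\tau_y} h(B_s)\mathrm{d}s}\right)
= \frac{x}{y}\,\mathbf{E}_x^B\!\left( e^{-(a+\frac{\rho^2}{2})\tau_y-\int_{0}^{\tau_y} h(B_s)\mathrm{d}s}\right),
\]
and combining the two identities gives the desired formula.

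There is essentially no obstacle here: the two lemmas have been stated in exactly the form needed for the composition, and the condition $0<x\le y$ required by both auxiliary results is inherited from the hypothesis of the present lemma. The only very small point worth checking is the hypothesis $a\ge 0$ in Lemma \ref{lemma_Bessel_BM}: since we are feeding in the parameter $a+\rho^2/2$, nonnegativity is automatic, so both applications are valid without any additional assumption. Hence the proof is a one-line composition and requires no further argument.
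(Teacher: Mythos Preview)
Your proposal is correct and matches the paper's own proof exactly: the paper simply states that the result follows immediately by combining Lemmas \ref{lemma_Bessel_BM} and \ref{lemma_BM_driftBM}, which is precisely the two-step composition you describe.
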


The following result can be found on \cite[page 469]{Borodin02}.
\begin{lemma}\label{lemma_property_Bessel3}
	For any $a>0$ and $0<x\le y$, it holds that
	\begin{align}
		\mathbf{E}_x^B	\left( e^{-a\tau_y}\right)=\frac{y\sinh(x\sqrt{2a})}{x\sinh(y\sqrt{2a})}.
	\end{align}
\end{lemma}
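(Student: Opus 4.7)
The plan is to reduce this Bessel identity to a classical first-passage Laplace transform for standard Brownian motion, via the Doob transform Lemma \ref{lemma_Bessel_BM} (which has already been proved in the excerpt with $h\equiv 0$ and $\rho=0$). Setting $h\equiv 0$ in that lemma gives
\[
\mathbf{E}_x\!\left(1_{\{\tau_y<\tau_0\}} e^{-a\tau_y}\right) = \frac{x}{y}\, \mathbf{E}_x^B\!\left(e^{-a\tau_y}\right),
\]
so it suffices to show
\[
\mathbf{E}_x\!\left(1_{\{\tau_y<\tau_0\}} e^{-a\tau_y}\right) = \frac{\sinh(x\sqrt{2a})}{\sinh(y\sqrt{2a})}
\]
for $0<x\le y$ and $a>0$.

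To obtain this classical formula, I would apply the optional stopping theorem to the bounded martingale $\{e^{-at}\sinh(B_t\sqrt{2a}): t\ge 0\}$ (this is indeed a martingale since $u(t,z)=e^{-at}\sinh(z\sqrt{2a})$ satisfies $\partial_t u + \tfrac12 \partial_{zz} u = 0$) at the stopping time $\tau := \tau_0\wedge\tau_y$. Since $0\le B_{t\wedge\tau}\le y$ and the martingale is bounded on $[0,\tau]$, and since $\mathbf{P}_x(\tau<\infty)=1$, we get
\[
\sinh(x\sqrt{2a}) = \mathbf{E}_x\!\left(e^{-a\tau} \sinh(B_\tau \sqrt{2a})\right).
\]
Because $\sinh(0)=0$, only the event $\{\tau_y<\tau_0\}$ contributes on the right-hand side, and on this event $B_\tau = y$. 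Rearranging yields the stated identity, and combining with the Doob-transform identity above completes the proof.

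There is no real obstacle here; everything reduces to a one-line optional stopping argument once the correct martingale is identified. The only minor point to justify carefully is the boundedness that legitimizes optional stopping (which is immediate since the martingale stays in the strip $[0,\sinh(y\sqrt{2a})]$ up to time $\tau$) and the fact that $\mathbf{P}_x(\tau<\infty)=1$, which is standard for Brownian motion exiting a bounded interval.
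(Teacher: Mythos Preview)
Your argument is correct. The reduction via Lemma~\ref{lemma_Bessel_BM} with $h\equiv 0$ is valid, and the optional stopping computation with the martingale $e^{-at}\sinh(B_t\sqrt{2a})$ at $\tau=\tau_0\wedge\tau_y$ is standard and cleanly justified (the stopped process is bounded in $[0,\sinh(y\sqrt{2a})]$ and $\tau<\infty$ a.s.). One small wording issue: the unstopped martingale is not bounded; what you use, and correctly explain later, is that the \emph{stopped} martingale is bounded.

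As for comparison with the paper: the paper does not prove this lemma at all---it simply cites \cite[p.~469]{Borodin02}. Your proof is therefore a self-contained derivation where the paper relies on a handbook formula. This is a genuine addition: it makes the result independent of the reference and, via the Doob-transform lemma already established in the paper, ties it back to elementary Brownian-motion facts rather than Bessel-process machinery.
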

Combining Lemmas \ref{lemma_Bessel_driftBM} and  \ref{lemma_property_Bessel3},
we see that for any $\rho>0$ and $x>0$,
\begin{align}\label{Scale-function}
     \lim_{y\to\infty}	\mathbf{P}_x^\rho\left(\tau_y<\tau_0\right) = \lim_{y\to\infty} e^{\rho(y-x)}  \frac{\sinh(x\rho )}{\sinh(y\rho)}=  1-e^{-2\rho x}.
\end{align}

\subsection{One side F-KPP equation}\label{subsection_oneside_FKPP}

According to Chauvin and Rouault \cite{Chauvin88}, a branching killed Brownian motion with drift is closely related to the following PDE
\begin{align}\label{one-side-FKPP}
	 \frac{\partial w}{\partial t}=\frac{1}{2}\frac{\partial^2w}{\partial x^2}-\rho\frac{\partial w}{\partial x}+	\beta \left(\sum_{k=0}^{\infty}p_k w^k-w\right)
\end{align}
on $[0,\infty)\times [0,\infty)$.
Let $\widetilde{N}_t^{-\rho}$ be the set of particles alive at time $t$ of the branching killed Brownian motion.
Then for any $[0,1]$-valued function $h$ on
$[0, \infty)$ with $h(0)=1$,
$w(x,t)=\mathbb{E}_x(\prod_{v\in \widetilde{N}_t^{-\rho}}h(X_v(t)))$ is a solution of \eqref{one-side-FKPP} with initial  condition $w(x,0)=h(x)$.
Define $w(x,t):=\mathbb{P}_x(\widetilde{\zeta}^{-\rho}\le t)$ and let $s\in [0,t]$. By the Markov property, we have
\begin{align}
	w(x,t)=\mathbb{E}_x\left(\prod_{v\in \widetilde{N}_s^{-\rho}}w(X_u(s),t-s)\right).
\end{align}
Thus, $w(x,t)$ is a solution to \eqref{one-side-FKPP} with initial  condition
 $w(x,0)=1_{\{x\le 0\}}$ and boundary condition $w(0+,t)=1$. Now let
\begin{align}\label{Def-u}
	u(x,t):=\mathbb{P}_x(\widetilde{\zeta}^{-\rho}>t)=1-w(x,t).
\end{align}
Then  $u$ satisfies
\begin{equation}\label{PDE_u}
	\frac{\partial u}{\partial t}=\frac{1}{2}\frac{\partial^2u}{\partial x^2}-\rho\frac{\partial u}{\partial x}-\Phi(u)
 \quad \mbox{ on } (0,\infty)\times (0,\infty)
\end{equation}
 with initial condition $u(x,0)=1_{(0,\infty)}(x)$ and boundary condition $u(0+,t)=0$, where the function $\Phi$ is defined in \eqref{def_Phi}. Similarly, for any $z>0$, the function
\begin{equation}\label{def-Q}
    Q_z(x,t):=\mathbb{P}_x(\widetilde{M}_t^{-\rho}>z),\quad x,t>0
\end{equation}
 satisfies \eqref{PDE_u} with initial condition $Q_z(x,0)=1_{\{x>z\}}$ and boundary condition $Q_z(0+,t)=0$.

The next simple result will be used
in the proofs of our main results.

\begin{lemma}\label{upper-k}
 The function $\varphi(u)$ is increasing in
  $u\in [0,1]$.
 Moreover, under \eqref{LLogL-moment-condition}, for any $c>0$, it holds that
 \[
      \int_0^\infty \varphi\left(e^{-ct }\right)\mathrm{d}t<\infty.
 \]
\end{lemma}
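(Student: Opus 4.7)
The plan is to write $\varphi$ as a manifestly nonnegative sum indexed by the offspring distribution, and then reduce both claims to elementary one-variable estimates on the summands. Using $\alpha = \beta(1-m)$ together with $\sum_k p_k = 1$, a short rearrangement of \eqref{def_Phi} gives
\begin{equation}
\varphi(u) = \beta \sum_{k\ge 2} p_k\, g_k(u), \qquad g_k(u) := \frac{(1-u)^k - 1 + ku}{u},\quad u\in(0,1],
\end{equation}
with $g_k(0+)=0$ and $g_k(1)=k-1$; Bernoulli's inequality yields $g_k\ge 0$ on $[0,1]$. The problem then splits into two essentially independent tasks.

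For the monotonicity of $\varphi$, I would show that each $g_k$ is nondecreasing on $[0,1]$. A direct computation gives
\begin{equation}
g_k'(u) = \frac{1 - (1-u)^{k-1}\bigl(1+(k-1)u\bigr)}{u^2},
\end{equation}
so the claim reduces to the inequality $h_k(u) := (1-u)^{k-1}(1+(k-1)u) \le 1$ on $[0,1]$. Since $h_k(0)=1$ and a short calculation gives $h_k'(u) = -k(k-1)u(1-u)^{k-2}\le 0$, the bound is immediate, and summing in $k$ yields that $\varphi$ is nondecreasing.

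For the integral estimate, the substitution $s=e^{-ct}$ turns the integral into
\begin{equation}
\int_0^\infty \varphi(e^{-ct})\,\mathrm{d}t = \frac{1}{c}\int_0^1 \frac{\varphi(s)}{s}\,\mathrm{d}s = \frac{\beta}{c}\sum_{k\ge 2} p_k \int_0^1 \frac{g_k(s)}{s}\,\mathrm{d}s,
\end{equation}
where Tonelli justifies the interchange. An integration by parts (or equivalently, after the change of variable $v=1-s$, the identity $\int_0^1 \frac{1-(1-s)^{k-1}}{s}\mathrm{d}s = H_{k-1}$ obtained from summing a geometric series) yields $\int_0^1 g_k(s)/s\,\mathrm{d}s = k H_{k-1} - (k-1)$, where $H_n:=\sum_{j=1}^n 1/j$. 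Since $H_{k-1}=O(\log k)$, the series is controlled by a constant multiple of $\sum_{k\ge 2} k(\log k) p_k$, and the assumption \eqref{LLogL-moment-condition} delivers finiteness. The only step requiring any real care is the evaluation of $\int_0^1 g_k(s)/s\,\mathrm{d}s$; this is a routine computation, and no conceptually new idea is needed beyond choosing the right decomposition of $\varphi$.
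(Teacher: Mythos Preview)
Your proof is correct, but it takes a genuinely different route from the paper's, especially for the integral estimate. For monotonicity, the paper rewrites $\beta^{-1}\varphi(u)$ via an Abel-type rearrangement as $\sum_{\ell\ge 0}\bigl(\sum_{k>\ell}p_k\bigr)\bigl(1-(1-u)^\ell\bigr)$, which is manifestly increasing; your decomposition $\varphi(u)=\beta\sum_{k\ge 2}p_k g_k(u)$ followed by direct differentiation of each $g_k$ is equally valid and arguably more elementary. For the integral, the paper does \emph{not} compute $\int_0^1 \varphi(s)/s\,\mathrm{d}s$ directly. Instead it leverages the equivalence (stated in the introduction, citing Asmussen--Hering) between \eqref{LLogL-moment-condition} and $\int_0^\infty \varphi(g(s))\,\mathrm{d}s<\infty$, where $g(t)=\P_0(\zeta>t)$; combining this with the lower bound $g(t)\ge C_{sub}e^{-\alpha t}$ (immediate from \eqref{e:rsnew}) and the monotonicity of $\varphi$ gives $\int_0^\infty \varphi(C_{sub}e^{-\alpha t})\,\mathrm{d}t<\infty$, and a time shift handles the general constant $c>0$. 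Your approach is more self-contained --- you go straight from the $L\log L$ hypothesis to finiteness via the exact evaluation $\int_0^1 g_k(s)/s\,\mathrm{d}s=kH_{k-1}-(k-1)$, bypassing any appeal to the asymptotics of the survival probability $g(t)$. The paper's argument is shorter because it recycles a nontrivial classical fact; yours does slightly more work but has the advantage of being independent of that fact.
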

\begin{proof}
	 By the definition of $\varphi$,
	\begin{align}
		&\beta ^{-1} \varphi(u)
		 = \frac{\sum_{k=0}^\infty p_k(1-u)^k -(1-u)}{u} -\left(1-\sum_{k=0}^\infty kp_k\right)\nonumber\\
		& =  \sum_{\ell=0}^\infty \left(\sum_{k=\ell+1}^\infty p_k\right)-\sum_{k=1}^\infty p_k \sum_{\ell=0}^{k-1} (1-u)^\ell  =  \sum_{\ell=0}^\infty \left(\sum_{k=\ell+1}^\infty p_k\right) \left( 1-(1-u)^\ell\right).
	\end{align}
	Therefore, $\varphi$ is increasing in $u$.
	Combining
	 the monotonicity of $\varphi$ and 	\eqref{e:rsnew}, we have
    \begin{align}
    \int_0^\infty \varphi(C_{sub}e^{-\alpha t})\mathrm{d}t \leq 	\int_0^\infty \varphi(g(t))\mathrm{d}t<\infty.
    \end{align}
  Setting $N:= - \frac{1}{\alpha}\log C_{sub} $, then for any $c>0$,
    \begin{align}
    	 & \int_0^\infty \varphi\left(e^{-ct }\right)\mathrm{d}t = \frac{\alpha}{c} \int_0^\infty \varphi\left(e^{-\alpha t }\right)\mathrm{d}t \leq \frac{\alpha}{c}\int_0^N \varphi(1)\mathrm{d}t+ \frac{\alpha}{c}\int_0^\infty \varphi(e^{-\alpha(t-N)})\mathrm{d}t \\
    	 &= \frac{\alpha}{c} N \varphi(1) + \frac{\alpha}{c}  \int_0^\infty \varphi(C_{sub}e^{-\alpha t})\mathrm{d}t<\infty.
    \end{align}
\end{proof}

\section{Proofs of Theorem \ref{thm1} and Theorem \ref{thm3}}\label{proof of thm1}

 In this section, we prove Theorem \ref{thm1} and Theorem \ref{thm3} by establishing some upper and lower bounds for the functions $u(t,x)$  and $Q_{z}(x,t)$ defined in \eqref{Def-u} and \eqref{def-Q} respectively.
It is easy to see that
\begin{align}\label{Identity}
Q_{0}(x,t)= \P_x\left(\widetilde{M}^{-\rho}_t> 0\right)= \P_x \left(\widetilde{\zeta}^{-\rho}>t\right) = u(x,t).
\end{align}

We first estimate $Q_{\sqrt{t}z-\rho t}(x,t)$ and $u(x,t)$  from below.
We treat the cases  $\rho=0$ and $\rho<0$ together since it turns out that branching and spatial motion are nearly independent in these two cases.

\begin{lemma}\label{lemma_upper_bound_u}
	Suppose that $x>0$ and $\rho\leq 0$.
	\begin{enumerate}
		\item[(i)] If $\rho=0$, then for any $z\geq 0$,
		\[
		\liminf_{t\to\infty}\sqrt{t}e^{\alpha t}Q_{\sqrt{t}z}(x,t) \ge  \sqrt{\frac{2}{\pi}} C_{sub} x e^{-\frac{z^2}{2}}.
		\]
		\item[(ii)] If $\rho <0$, then
		\[
			\liminf_{t\to\infty}e^{\alpha t}u(x,t)\ge  C_{sub} (1-e^{2\rho x}),
		\]
		and for any $z\in \R$,
		\[
			\liminf_{t\to\infty}e^{\alpha t}Q_{\sqrt{t}z-\rho t}(x,t)
			\geq \frac{C_{sub}(1-e^{2\rho x})}{\sqrt{2\pi}} \int_z^\infty e^{-\frac{y^2}{2}}\mathrm{d}y.
		\]
 	\end{enumerate}
\end{lemma}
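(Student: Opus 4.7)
The plan is to combine a Feynman-Kac representation of $u$ and $Q_w$ (from \eqref{Def-u}, \eqref{def-Q}) with the Brownian estimates in Lemma \ref{lemma_tau>t_with_drift} and the monotonicity of $\varphi$ established in Lemma \ref{upper-k}. Using the decomposition $\Phi(v)=(\alpha+\varphi(v))v$ in \eqref{def_Phi}, I would rewrite \eqref{PDE_u} (and the analogous equation for $Q_w$ with $w\ge 0$) as
\[
\frac{\partial v}{\partial t}=\frac{1}{2}\frac{\partial^2 v}{\partial x^2}-\rho\frac{\partial v}{\partial x}-(\alpha+\varphi(v))v\quad\mbox{on }(0,\infty)\times(0,\infty),
\]
with Dirichlet condition $v(0+,t)=0$ and initial datum $v(\cdot,0)=1_{\{x>w\}}$ (taking $w=0$ for $u$). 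Treating $\alpha+\varphi(v)$ as a known nonnegative potential, the Feynman-Kac formula then yields
\[
Q_w(x,t)=\mathbf{E}_x^{-\rho}\Big[1_{\{\tau_0>t,\,B_t>w\}}\exp\Big(-\alpha t-\int_0^t \varphi(Q_w(B_s,t-s))\,\mathrm{d}s\Big)\Big],
\]
and the same identity for $u(x,t)$ when $w=0$, in view of \eqref{Identity}.

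Next, I would invoke the pointwise bound $Q_w(y,r)\le u(y,r)\le g(r)$ for all $y>0$, $r>0$ and $w\ge 0$. The inequality $Q_w\le u$ follows from $\{\widetilde{M}_r^{-\rho}>w\}\subseteq\{\widetilde{\zeta}^{-\rho}>r\}$, and $u\le g$ holds because the killed branching process has no more particles than the unkilled subcritical BBM, whose extinction time is distributed as $\zeta$ irrespective of the starting point and drift. By the monotonicity of $\varphi$ (Lemma \ref{upper-k}), $\varphi(Q_w(B_s,t-s))\le \varphi(g(t-s))$ on $\{\tau_0>t\}$, so the deterministic upper bound can be pulled out of the expectation; after the substitution $r=t-s$ this produces
\[
e^{\alpha t}Q_w(x,t)\ge \exp\Big(-\int_0^t \varphi(g(r))\,\mathrm{d}r\Big)\,\mathbf{P}_x^{-\rho}\big(\tau_0>t,\,B_t>w\big).
\]

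Finally, letting $t\to\infty$, the deterministic factor tends to $C_{sub}$ by \eqref{Integral-of-phi-is-finite} and \eqref{Constant-C-sub}, while the probabilistic factor is analyzed case by case via Lemma \ref{lemma_tau>t_with_drift}: take $\rho=0$ and $w=\sqrt{t}z$ for part (i); take $\rho<0$ and $w=0$ for the first bound in part (ii); and take $\rho<0$ and $w=\sqrt{t}z-\rho t$ (which is nonnegative for $t$ large since $-\rho>0$) for the second bound in part (ii), using $\{B_t>\sqrt{t}z-\rho t\}=\{B_t+\rho t>\sqrt{t}z\}$. Multiplying the two limits gives the three claimed liminfs.

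The main obstacle I anticipate is justifying the Feynman-Kac representation up to the Dirichlet boundary with an indicator initial datum. Since $Q_w$ and $\varphi(Q_w)$ are bounded, this can be handled either by a standard parabolic uniqueness argument applied to the semilinear PDE, or more probabilistically by conditioning on the first branching event of the killed BBM and iterating via the strong Markov property of $B$ killed at $\tau_0$; monotone convergence along this iteration recovers the formula directly from the probabilistic definition of $Q_w$. Beyond this step the argument is essentially bookkeeping with the two cited lemmas.
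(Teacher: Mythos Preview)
Your proposal is correct and follows essentially the same route as the paper: the Feynman-Kac representation \eqref{expression_u}, the bound $Q_z\le g$ via $\widetilde{\zeta}\le\zeta$, the monotonicity of $\varphi$ from Lemma \ref{upper-k}, and then Lemma \ref{lemma_tau>t_with_drift}. The paper does not dwell on the justification of the Feynman-Kac formula (it is taken as standard from Section \ref{subsection_oneside_FKPP}), but otherwise your argument matches the paper's almost line for line.
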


\begin{proof}
	Recall that $Q_z(x,t)$ satisfies \eqref{PDE_u}
	with initial condition $Q_z(x,0)=1_{(z,\infty)}(x)$ for $x>0$ and boundary condition $Q_z(0+,t)=0$ for $t> 0$.
	Combining
	 the definition \eqref{def_Phi} of $\varphi$  and the  Feynman-Kac formula, we have
	\begin{align}\label{expression_u}
		Q_z(x,t)=e^{-\alpha t}\mathbf{E}_x^{-\rho} \left( 1_{\{\tau_0>t, B_t>z\}}e^{-\int_{0}^{t}\varphi(Q_z(B_s,t-s))\mathrm{d}s}
		\right).
	\end{align}
	Since $\widetilde{\zeta} \leq \zeta$, it holds that
	\begin{align}\label{upper_bound_u-2}
		Q_z(x,t)\le \mathbb{P}_x(\zeta >t)=g(t),\quad x, t>0, z\geq 0.
	\end{align}
Thus by Lemma \ref{upper-k},
	\begin{align}
		Q_z(x,t)	&\ge e^{-\alpha t}\mathbf{E}_x^{-\rho}\left( 1_{\{\tau_0>t, B_t>z\}}e^{-\int_{0}^{t}\varphi(g(t-s))\mathrm{d}s}\right)\\
		&=e^{-\int_{0}^{t}\varphi(g(s))\mathrm{d}s}e^{-\alpha t}\mathbf{P}_x^{-\rho}(\tau_0>t, B_t>z)\\
		&\ge C_{sub}e^{-\alpha t}	\mathbf{P}_x^{-\rho}(\tau_0>t, B_t>z),
	\end{align}
where in the last inequality we used \eqref{Constant-C-sub}. Recalling \eqref{Identity} and using Lemma \ref{lemma_tau>t_with_drift} with $z$ replaced by $0$ and $\sqrt{t}z -\rho t$, we get the desired result.
\end{proof}

In the following lemma, we
 give an upper bound
 of $\mathbb{P}_0(\max_{s\le t}M_s^{\rho}\ge x)$ which will be used to get the lower bound of $Q_z(x,t)$.

\begin{lemma}\label{lemma_lower_bound_u}
	Assume $\rho \le 0$.  For any $x,t>0$, it holds that
	\[
	 \mathbb{P}_0(\max_{s\le t}M_s^{\rho}\ge x)\leq e^{-\sqrt{2\alpha} x}.
	\]
\end{lemma}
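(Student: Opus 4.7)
The plan is a classical Chernoff/optional stopping argument using the additive exponential martingale of the BBM with drift $\rho$. By the many-to-one identity (the analogue of \eqref{Many-to-one} for the un-killed process) together with $\mathbf{E}_0^\rho(e^{\lambda B_s})=e^{\lambda\rho s+\lambda^2 s/2}$, for every $\lambda\in\R$ one has $\mathbb{E}_0\bigl[\sum_{u\in N_s^\rho}e^{\lambda X_u(s)}\bigr]=\exp\!\bigl((\lambda^2/2+\lambda\rho-\alpha)s\bigr)$; combined with the branching Markov property, this shows that
\[
W_s^\lambda:=\sum_{u\in N_s^\rho}\exp\!\Bigl(\lambda X_u(s)-\bigl(\tfrac{\lambda^2}{2}+\lambda\rho-\alpha\bigr)s\Bigr),\qquad s\ge 0,
\]
is a non-negative $\mathbb{P}_0$-martingale with $W_0^\lambda=1$.

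The key choice is $\lambda=\sqrt{2\alpha}$, which makes $\lambda^2/2-\alpha=0$ and reduces the time exponent to $-\sqrt{2\alpha}\,\rho s$. Because $\rho\le 0$, this quantity is non-negative, and we obtain the pathwise lower bound $W_s^\lambda\ge\sum_{u\in N_s^\rho}e^{\sqrt{2\alpha}\,X_u(s)}\ge e^{\sqrt{2\alpha}\,M_s^\rho}$. Setting $\tau_x:=\inf\{s\ge 0:M_s^\rho\ge x\}$, so that $\{\max_{s\le t}M_s^\rho\ge x\}=\{\tau_x\le t\}$, the stopping time $\tau_x\wedge t$ is bounded; optional stopping gives $\mathbb{E}_0(W_{\tau_x\wedge t}^\lambda)=1$, while path continuity of the spatial motion forces $M_{\tau_x}^\rho=x$ on $\{\tau_x\le t\}$, so $W_{\tau_x}^\lambda\ge e^{\sqrt{2\alpha}\,x}$ on that event. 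Therefore
\[
1=\mathbb{E}_0\bigl(W_{\tau_x\wedge t}^\lambda\bigr)\ge \mathbb{E}_0\bigl(W_{\tau_x}^\lambda\mathbf{1}_{\{\tau_x\le t\}}\bigr)\ge e^{\sqrt{2\alpha}\,x}\,\mathbb{P}_0(\tau_x\le t),
\]
which is the claimed inequality.

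The argument presents no substantive obstacle. The two technical checks---the martingale property of $W^\lambda$ (routine from the mean-one identity plus the branching Markov property) and the use of optional stopping at a bounded time for a non-negative martingale (immediate)---are standard. The role of the assumption $\rho\le 0$ is precisely to absorb the drift into a favourable pathwise inequality, which is what lets the clean rate $\sqrt{2\alpha}$, independent of $\rho$, survive in the final bound; for $\rho>0$ the same optimization would instead force the larger rate $\rho+\sqrt{2\alpha+\rho^2}$ appearing later in Theorem \ref{thm2}.
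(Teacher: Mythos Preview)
Your argument is correct and is essentially the same as the paper's: both proofs rely on the same additive exponential martingale $\sum_{u\in N_s^\rho}e^{\sqrt{2\alpha}(X_u(s)-\rho s)}$ and the same pathwise bound $W_s\ge e^{\sqrt{2\alpha}M_s^\rho}$ (valid because $\rho\le 0$). The only cosmetic difference is that the paper invokes Doob's maximal inequality for the non-negative martingale, whereas you spell out the optional-stopping version of that inequality at $\tau_x\wedge t$; these are two formulations of the same step.
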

\begin{proof}
By \eqref{Many-to-one}, it
is easily seen that $W_t^\rho :=\sum_{u\in N_t^{\rho}}e^{\sqrt{2\alpha}(X_u(t)-\rho t)}$ is a non-negative martingale. For any $\rho\leq 0$, by Doob's maximal inequality, we have
	\begin{align} \label{upper-bound-tail-of-maximal}
		\mathbb{P}_0(\max_{s\le t}M_s^{\rho}\ge x)		&\le	\mathbb{P}_0(\max_{s\le t}  e^{\sqrt{2\alpha}\rho s}W_s^\rho 	\ge e^{\sqrt{2\alpha} x})	\le \mathbb{P}_0(\max_{s\le t}
		W_s^\rho	\ge e^{\sqrt{2\alpha} x})	\\	&\le
		\frac{\mathbb{E}_0(W_t^\rho )}{e^{\sqrt{2\alpha} x}}	=e^{-\sqrt{2\alpha} x}.\nonumber
	\end{align}
   Therefore, we arrive at the desired result.
\end{proof}

\begin{lemma}\label{upper_bound_u}
	Assume that $\rho=0$ and $x>0$.  Then for any $z\geq 0$,  it holds that
		\begin{align}\label{limsup_u}
			\limsup_{t\to\infty}\sqrt{t}e^{\alpha t}Q_{\sqrt{t}z}(x,t) \le  \sqrt{\frac{2}{\pi}} C_{sub}x e^{-\frac{z^2}{2}}.
		\end{align}
\end{lemma}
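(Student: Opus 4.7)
The plan is to combine the Feynman--Kac representation \eqref{expression_u} with the Bessel change of measure \eqref{change_mea} to write
\[
\sqrt{t}\,e^{\alpha t}Q_{\sqrt{t}z}(x,t) = \sqrt{t}\,\mathbf{E}_x^B\!\left[\frac{x}{B_t}\mathbf{1}_{\{B_t>\sqrt{t}z\}}\exp\!\left(-\int_0^t \varphi(Q_{\sqrt{t}z}(B_s,t-s))\,\d s\right)\right].
\]
Since $\varphi(Q)\ge 0$, truncating the time integral to $[t-K,t]$ for a fixed $K>0$ yields an upper bound whose exponent, after the substitution $r=t-s$, becomes $\int_0^K \varphi(Q_{\sqrt{t}z}(B_{t-r},r))\,\d r$. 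A direct density computation (equivalent to reading Lemma \ref{lemma_tau>t_with_drift}(i) through \eqref{change_mea}) shows that $A\mapsto \sqrt{t}\,\mathbf{E}_x^B[\tfrac{x}{B_t}\mathbf{1}_{\{B_t>\sqrt{t}z\}}\mathbf{1}_A]$ converges weakly to a subprobability measure of total mass $\sqrt{2/\pi}\,x\,e^{-z^2/2}$. Hence it suffices to prove that, on typical Bessel paths, the truncated exponential tends to $\exp(-\int_0^K \varphi(g(r))\,\d r)$, and then let $K\to\infty$.

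The pointwise convergence boils down to $\varphi(Q_{\sqrt{t}z}(B_{t-r},r))\to\varphi(g(r))$ for each fixed $r\in[0,K]$. Under $\mathbf{P}_x^B$ the 3-dimensional Bessel process is transient and $B_t/\sqrt{t}$ has a non-degenerate limit distribution, so on the event $\{B_t>\sqrt{t}z\}$ we have $B_{t-r}-\sqrt{t}z\to\infty$ almost surely (the limit $W$ of $B_t/\sqrt{t}$ satisfies $W>z$ a.s.\ on this event, modulo a null set). It then remains to establish the key claim that $Q_{\sqrt{t}z}(x_0,r)\to g(r)$ whenever $x_0-\sqrt{t}z\to\infty$, for each fixed $r>0$. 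Intuitively, a subcritical BBM started from such a far-away point has, in expectation at most one particle at time $r$, all particles concentrated within $O(\sqrt{r})$ of $x_0$, so with probability tending to $1$ no particle either hits $0$ or falls below $\sqrt{t}z$, and consequently $\{\widetilde{M}_r^0>\sqrt{t}z\}$ coincides with $\{\zeta>r\}$ asymptotically.

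This last claim is the main obstacle; a clean way to establish it is to bound, via the many-to-one lemma \eqref{Many-to-one}, the probability that some particle leaves $(\sqrt{t}z,\infty)$ before time $r$ by $\E_0(|N_r^0|)\cdot\mathbf{P}_{x_0}(\tau_0\wedge\tau_{\sqrt{t}z}\le r)$, and then use that the expected number of particles is finite while the Brownian probability vanishes as $x_0-\sqrt{t}z\to\infty$. Granted the claim, bounded convergence (the truncated exponential being dominated by $1$) produces
\[
\limsup_{t\to\infty}\sqrt{t}\,e^{\alpha t}Q_{\sqrt{t}z}(x,t)\le \exp\!\left(-\int_0^K \varphi(g(r))\,\d r\right)\cdot \sqrt{\tfrac{2}{\pi}}\,x\,e^{-z^2/2}
\]
for every $K>0$. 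Letting $K\to\infty$ and invoking \eqref{Constant-C-sub} yields the desired upper bound $\sqrt{2/\pi}\,C_{sub}\,x\,e^{-z^2/2}$.
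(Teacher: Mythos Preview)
Your overall architecture matches the paper's: start from the Feynman--Kac representation \eqref{expression_u}, truncate the time integral to a window of fixed length $K$ (the paper calls it $N$), show that on that window $Q_{\sqrt{t}z}(\,\cdot\,,r)$ is close to $g(r)$ because the spatial argument sits far above the barrier $\sqrt{t}z$, and finally let $K\to\infty$ using \eqref{Constant-C-sub}. The ``key claim'' $Q_{\sqrt{t}z}(x_0,r)\to g(r)$ as $x_0-\sqrt{t}z\to\infty$ is correct and is exactly what the paper establishes (via Lemma~\ref{lemma_lower_bound_u} and the estimate \eqref{lower_bd_M>t^{beta}}).

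The gap is in how you pass to the limit in $t$. You write that under $\mathbf{P}_x^B$ ``the limit $W$ of $B_t/\sqrt{t}$ satisfies $W>z$ a.s.\ on this event'', but there is no such almost-sure limit: $B_t/\sqrt{t}$ converges only in distribution, and the event $\{B_t>\sqrt{t}z\}$ varies with $t$. Consequently the sentence ``on typical Bessel paths, the truncated exponential tends to $\exp(-\int_0^K\varphi(g(r))\,\d r)$'' has no rigorous content: you are integrating a $t$-dependent bounded functional $F_t$ against a $t$-dependent sub-probability $\mu_t$, and neither pointwise convergence of $F_t$ nor weak convergence of $\mu_t$ alone suffices to conclude $\int F_t\,\d\mu_t\to F_\infty\cdot\mu_\infty(\Omega)$. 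This is precisely the place where the paper does real work. It first observes that $Q_z(\cdot,t)$ is monotone in the spatial variable (\eqref{Increasing-in-x-of-Q}), then splits the expectation according to whether $\inf_{r\in[t-N,t]}B_r\ge \sqrt{t}z+t^\gamma$ for some fixed $\gamma\in(0,\tfrac12)$. On the ``good'' piece $B_1(t)$ the exponent is bounded \emph{deterministically} by $\int_0^N\varphi\big(Q_{\sqrt{t}z}(\sqrt{t}z+t^\gamma,s)\big)\,\d s$, reducing the problem to the scalar limit you identify; the ``bad'' piece $B_2(t)$ is shown directly to have $o(t^{-1/2})$ mass (\eqref{eq:upper-B}--\eqref{B2_prob_upper_bd}). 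Your proposal is missing exactly this splitting (or an equivalent device such as conditioning on $B_t/\sqrt{t}$ and controlling the fluctuation $\sup_{r\le K}|B_t-B_{t-r}|$ separately), without which the bounded-convergence step does not go through.
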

\begin{proof}
	For any $y\geq x$,
	\begin{align}\label{Increasing-in-x-of-Q}
		&Q_z(y,t) = \P_y\left(\exists u\in N_t^{-\rho}\quad s.t.\ \min_{s\leq t}X_u(s)>0,\ X_u(t)>z\right)\\
		& \geq \P_y\left(\exists u\in N_t^{-\rho}\quad s.t.\ \min_{s\leq t}X_u(s)> y-x,\ X_u(t)>z+y-x\right)\nonumber\\
		& = \P_x\left(\exists u\in N_t^{-\rho}\quad s.t.\ \min_{s\leq t}X_u(s)>0,\ X_u(t)>z\right)= Q_z(x,t),
	\end{align}
	which implies that $Q_z(x,t)$ is increasing in $x$.	Fix an $N>0$. For $t\ge N$, by \eqref{Increasing-in-x-of-Q},
	\begin{align}\label{upper_bd_Q}
		Q_{\sqrt{t}z}(x,t)		&\le e^{-\alpha t} 		\mathbf{E}_x
		\left(  1_{\{\tau_0>t,B_t>\sqrt{t}z\}} e^{-\int_{t-N}^{t} \varphi(Q_{\sqrt{t}z}(B_s,t-s)) \mathrm{d}s}\right)
		\\
		&\le e^{-\alpha t} 	\mathbf{E}_x		\left( 1_{\{\tau_0>t,B_t>\sqrt{t}z\}} e^{-\int_{t-N}^{t} \varphi(Q_{\sqrt{t}z}(\inf_{r\in [t-N,t]}B_r,t-s)) \mathrm{d}s}\right)
		\nonumber\\
		&= e^{-\alpha t} 		\mathbf{E}_x		\left( 1_{\{\tau_0>t,B_t>\sqrt{t}z\}} e^{-\int_{0}^{N} \varphi(Q_{\sqrt{t}z}(\inf_{r\in [t-N,t]}B_r,s)) \mathrm{d}s}\right).
		 \nonumber
	\end{align}
	Take a $\gamma\in(0,\frac{1}{2})$ and define
	\begin{align}
		B_1(t):=\mathbf{E}_x\left( 1_{\{\tau_0>t,B_t>\sqrt{t}z, \inf_{r\in [t-N,t]}B_r\ge \sqrt{t}z+t^{\gamma} \}} e^{-\int_{0}^{N} \varphi(Q_{\sqrt{t}z}(\inf_{r\in [t-N,t]}B_r,s)) \mathrm{d}s}\right),
	\end{align}
	\begin{align}
		B_2(t):=\mathbf{E}_x\left( 1_{\{\tau_0>t,B_t>\sqrt{t}z, \inf_{r\in [t-N,t]}B_r< \sqrt{t}z+t^{\gamma} \}} e^{-\int_{0}^{N} \varphi(Q_{\sqrt{t}z}(\inf_{r\in [t-N,t]}B_r,s)) \mathrm{d}s}\right).
	\end{align}
	Then $Q_{\sqrt{t}z}(x,t)\le e^{-\alpha t}(B_1(t)+B_2(t))$.	Since $Q_z(x,t)$ is increasing in $x$, we have
	\begin{align}\label{B_1}
		B_1(t)
		\le &e^{-\int_{0}^{N} \varphi(Q_{\sqrt{t}z}( \sqrt{t}z+t^{\gamma},s)) \mathrm{d}s} \mathbf{P}_x(\tau_0>t,B_t>\sqrt{t}z).
	\end{align}
	Set $\widetilde{M}_s:= \widetilde{M}_s^0$ and $M_s:= M_s^0$ for simplicity. For any $s\leq N$, we have
	\begin{align}
		Q_{\sqrt{t}z}(\sqrt{t}z+t^{\gamma},s)
		&\ge \mathbb{P}_{\sqrt{t}z+t^{\gamma}}(\widetilde{M}_s>\sqrt{t}z,\inf_{r\le s}\inf_{u\in N_r}X_u(r)>0)\\
		&=\mathbb{P}_{\sqrt{t}z+t^{\gamma}}(M_s>\sqrt{t}z)
		-\mathbb{P}_{\sqrt{t}z+t^{\gamma}}(M_s>\sqrt{t}z,\inf_{r\le s}\inf_{u\in N_r}X_u(r)\le 0)\\
		&\ge \mathbb{P}_{0}(M_s>-t^{\gamma})	-\mathbb{P}_{0}(\inf_{r\le s}\inf_{u\in N_r}X_u(r)\le -(\sqrt{t}z+t^{\gamma}))\\	&=\mathbb{P}_{0}(M_s>-t^{\gamma})
		-\mathbb{P}_{0}(\max_{r\le s}M_r\ge \sqrt{t}z+t^{\gamma}).
	\end{align}
	According to \eqref{Many-to-one},
	\begin{align}\label{lower_bd_M>t^{beta}}
		&\mathbb{P}_{0}(M_s>-t^{\gamma})
		\ge \mathbb{P}_{0}(\zeta>s, M_s>-t^{\gamma}) =\mathbb{P}_{0}(\zeta>s)
		-\mathbb{P}_{0}(\zeta>s, M_s\le -t^{\gamma})
		\\
		&\geq \mathbb{P}_{0}(\zeta>s) - \P_0\big(\sum_{u\in N_s} 1_{\{X_u(s) \leq t^{-\gamma}\}}\geq 1\big)
		\nonumber\\
		&\ge \mathbb{P}_{0}(\zeta>s)
		-e^{-\alpha s}\mathbf{P}_0(B_s\le -t^{\gamma}).\nonumber
	\end{align}
	Combining this with Lemma \ref{lemma_lower_bound_u}, we get
	\begin{align}
		Q_{\sqrt{t}z}(\sqrt{t}z+t^{\gamma},s)	\ge g(s)	-e^{-\alpha s}\mathbf{P}_0(B_s\le -t^{\gamma})
		-e^{-\sqrt{2\alpha}(\sqrt{t}z+t^{\gamma})}.
	\end{align}
	Plugging this into  \eqref{B_1} and applying the dominated convergence theorem, we get
	\begin{align}
		&\limsup_{t\to\infty}\frac{B_1(t)}{\mathbf{P}_x(\tau_0>t,B_t>\sqrt{t}z)} \nonumber\\
		&	\le \limsup_{t\to\infty}\exp\left\{-\int_{0}^{N} \varphi\left(\left(g(s)-e^{-\alpha s}\mathbf{P}_0(B_s\le -t^{\gamma})-e^{-\sqrt{2\alpha}(\sqrt{t}z+t^{\gamma})}\right)_+ \right) \mathrm{d}s\right\}\\
		&=e^{-\int_{0}^{N} \varphi(g(s)) \mathrm{d}s}.
	\end{align}
	Letting $N\to \infty$, we get
	\begin{align}\label{asym_prob}
		\limsup_{N\to\infty}  \limsup_{t\to\infty}\frac{B_1(t)}{\mathbf{P}_x(\tau_0>t,B_t>\sqrt{t}z)}
		\le e^{-\int_{0}^{\infty} \varphi(g(s)) \mathrm{d}s}	=C_{sub}<\infty.
	\end{align}
	Therefore,	applying Lemma \ref{lemma_tau>t_with_drift} (i), we get
	\begin{align}\label{asym_B1}
		\limsup_{N\to\infty}	\limsup_{t\to\infty} \sqrt{t}B_1(t) \le  \sqrt{\frac{2}{\pi}}C_{sub} xe^{-z^2/2}.
	\end{align}
	Next, we show that
	$\lim_{t\to\infty}\sqrt{t}B_2(t)=0$.
	For	$\delta >0$, it holds that
	\begin{align}\label{eq:upper-B}
		B_2(t)	&\le	\mathbf{P}_x	(\tau_0>t,B_t>\sqrt{t}z, \inf_{r\in [t-N,t]}B_r< \sqrt{t}z+t^{\gamma} )
		\\ &\le \mathbf{P}_x	\left(\tau_0>t,\sqrt{t}z<B_t<\sqrt{t}	(z+\delta) \right)\nonumber\\
		&\quad+	\mathbf{P}_x\left(B_t\ge \sqrt{t}	(z+\delta), 	\inf_{r\in [t-N,t]}B_r< \sqrt{t}z+t^{\gamma}\right).\nonumber
	\end{align}
Note that $e^{-u}(1-e^{-x})\leq x$ for all  $u, x>0$. Thus  by \eqref{change_mea}, we get
	\begin{align}\label{eq:upper-B-1}
		& \mathbf{P}_x	\left(\tau_0>t,\sqrt{t}z<B_t<\sqrt{t}(z+\delta)\right)
		=\mathbf{E}_x^B\left( \frac{x}{B_t}1_{\{ \sqrt{t}z<B_t<\sqrt{t}(z+\delta)\}}\right)\\	&=\int_{\sqrt{t}z}^{\sqrt{t}(z+\delta)}\frac{1}{\sqrt{2\pi t}}e^{-\frac{(y-x)^2}{2t}}(1-e^{-\frac{2xy}{t}}) \mathrm{d}y	\le\frac{\delta}{\sqrt{2\pi }}\frac{2x(z+\delta)}{\sqrt{t}}.\nonumber
	\end{align}
   For any $t\ge N$, by the reflection principle,  we have
	\begin{align}\label{B2_prob_upper_bd}
		&\mathbf{P}_x\left(B_t\ge \sqrt{t}(z+\delta),	\inf_{r\in [t-N,t]}B_r< \sqrt{t}z+t^{\gamma}\right)
		\le \mathbf{P}_0\left(\inf_{r\in[0,N]}B_r	<-\delta	\sqrt{t}+t^{\gamma}\right)	\\
		& = \mathbf{P}_0\left( |B_N| >	\delta	\sqrt{t}-t^\gamma \right).\nonumber
	\end{align}
	Combining \eqref{eq:upper-B}, \eqref{eq:upper-B-1} and \eqref{B2_prob_upper_bd}, letting $t\to\infty$ first and then $\delta \to0$,  we get
	\begin{align}
		\lim_{t\to\infty}\sqrt{t}B_2(t)=
		0.
	\end{align}
	Combining this with \eqref{upper_bd_Q} and \eqref{asym_B1}, we get the desired assertion.
\end{proof}

\begin{lemma}\label{lemma3.4}
	Assume that $x>0$ and $\rho<0$.
	\begin{enumerate}
		\item[(i)] It holds that
		\begin{align}
			\limsup_{t\to\infty} e^{\alpha t} u(x,t)\le C_{sub}(1-e^{2\rho x}).
		\end{align}
		\item[(ii)] For any $z\in \R$, we have
		\begin{align}
		\limsup_{t\to\infty} e^{\alpha t} Q_{\sqrt{t}z-\rho t}(x,t)	\le  \frac{C_{sub}(1-e^{2\rho x})}{\sqrt{2\pi}}\int_{z}^{\infty}e^{-\frac{y^2}{2}} \mathrm{d} y.
		\end{align}
	\end{enumerate}
\end{lemma}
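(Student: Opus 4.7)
The proof will follow the template of Lemma~\ref{upper_bound_u}, adapted to accommodate the negative drift. Write $z_t$ for the upper threshold on $B_t$, so $z_t=0$ in part~(i) and $z_t=z(t,\rho):=\sqrt{t}z-\rho t$ in part~(ii). Starting from the Feynman-Kac representation~\eqref{expression_u} with threshold $z_t$, I would truncate the integral to $[t-N,t]$ for a fixed $N$, use the monotonicity of $\varphi$ (Lemma~\ref{upper-k}) together with the fact that $Q_{z_t}(\cdot,s)$ is increasing (by the translation argument of \eqref{Increasing-in-x-of-Q}) to replace $B_s$ by $\inf_{r\in[t-N,t]}B_r$ in the integrand, and then split the resulting expectation into $B_1(t)+B_2(t)$ depending on whether $\inf_{r\in[t-N,t]}B_r\ge z_t+t^\gamma$ or not, for some fixed $\gamma\in(0,1/2)$.

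For $B_1(t)$, a further translation gives $Q_{z_t}(z_t+t^\gamma,s)\ge u(t^\gamma,s)$ whenever $z_t\ge 0$ (which holds for large $t$ in part~(ii) since $-\rho>0$). Using $\zeta\ge\widetilde{\zeta}^{-\rho}$ and a spatial shift I would bound
\[
g(s)-u(t^\gamma,s)\le \P_{t^\gamma}\bigl(\widetilde{\zeta}^{-\rho}\le s\bigr)\le \P_0\bigl(\min_{r\le s}\inf_{u\in N_r^{-\rho}} X_u(r)\le -t^\gamma\bigr),
\]
and apply Lemma~\ref{lemma_lower_bound_u} to the BBM $-X$ (which has drift $\rho<0$) to get $u(t^\gamma,s)\ge g(s)-e^{-\sqrt{2\alpha}\,t^\gamma}$. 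Dominated convergence, monotonicity of $\varphi$, and \eqref{Constant-C-sub} then yield
\[
\limsup_{N\to\infty}\limsup_{t\to\infty}\frac{B_1(t)}{\mathbf{P}_x^{-\rho}(\tau_0>t,\,B_t>z_t)}\le C_{sub},
\]
and Lemma~\ref{lemma_tau>t_with_drift}(ii) supplies the target multiplicative constants in both parts.

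The main obstacle will be showing $B_2(t)\to 0$, since the joint constraint on $B_t$ and $\inf_{r\in[t-N,t]}B_r$ needs care. For part~(i), the positive drift $-\rho$ under $\mathbf{P}_x^{-\rho}$ keeps $B_{t-N}$ concentrated near $-\rho t$, so $\mathbf{P}_x^{-\rho}(\tau_0>t,\,\inf_{r\in[t-N,t]}B_r<t^\gamma)$ decays much faster than $e^{-\alpha t}$, and $B_2(t)\to 0$ directly. For part~(ii), I would further split $B_2(t)$ by whether $B_t\in(z_t,z_t+\sqrt{t}\delta]$ or $B_t>z_t+\sqrt{t}\delta$ for some $\delta>0$: the first piece is controlled by Lemma~\ref{lemma_tau>t_with_drift}(ii) and converges to $\tfrac{(1-e^{2\rho x})}{\sqrt{2\pi}}\int_z^{z+\delta}e^{-y^2/2}\,dy$, which vanishes as $\delta\to 0$; the second forces the oscillation $B_t-\inf_{r\in[t-N,t]}B_r>\sqrt{t}\delta-t^\gamma\to\infty$, whose probability tends to $0$ as $t\to\infty$ since this quantity is distributed as a Brownian oscillation over the fixed interval $[0,N]$. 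Taking $t\to\infty$, then $\delta\to 0$, and finally $N\to\infty$ completes the argument.
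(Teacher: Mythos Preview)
Your proposal is correct and follows essentially the same approach as the paper's proof: Feynman--Kac truncation to $[t-N,t]$, replacement of $B_s$ by $\inf_{r\in[t-N,t]}B_r$ via the monotonicity of $Q_{z_t}(\cdot,s)$ and $\varphi$, and the split according to whether $\inf_{r\in[t-N,t]}B_r\ge z_t+t^\gamma$. Your handling of $B_1$ via the translation inequality $Q_{z_t}(z_t+t^\gamma,s)\ge u(t^\gamma,s)$ and the single error term $e^{-\sqrt{2\alpha}\,t^\gamma}$ is a minor streamlining of the paper's argument (which carries an additional $e^{-\alpha s}\mathbf{P}_0(B_s\le -t^\gamma)$ term), and your separate treatment of $B_2$ in part~(i) using the positive drift directly is a small simplification over the paper, which runs the same $\delta$-split for both (i) and (ii); otherwise the arguments coincide.
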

\begin{proof}
We will prove (i) and (ii) in one stroke. For (i) we put $z_t=0$ and for (ii) we put $z_t=\sqrt{t}z-\rho t$.
Then taking $z=z_t$ in  \eqref{expression_u}, we get
	\begin{align}\label{upper_bd_Q_rho<0}
		Q_{z_t}(x,t)	&\le e^{-\alpha t} \mathbf{E}_x^{-\rho}\left( 1_{\{\tau_0>t,B_t>z_t\}} e^{-\int_{t-N}^{t} \varphi(Q_{z_t}(B_s,t-s)) \mathrm{d}s}\right)	\\
		&\le e^{-\alpha t} \mathbf{E}_x^{-\rho}\left( 1_{\{\tau_0>t,B_t>z_t\}} e^{-\int_{0}^{N} \varphi(Q_{z_t}(\inf_{r\in[t-N,t]}B_r,s)) \mathrm{d}s}\right).\nonumber
	\end{align}
	Take a $\gamma\in(0,\frac{1}{2})$ and define
	\begin{align}
		C_1(t):=\mathbf{E}_x^{-\rho}\left( 1_{\{\tau_0>t,B_t>z_t, \inf_{r\in [t-N,t]}B_r\ge z_t+t^{\gamma} \}} e^{-\int_{0}^{N} \varphi(Q_{z_t}(\inf_{r\in [t-N,t]}B_r,s)) \mathrm{d}s}\right),
	\end{align}
	\begin{align}
		C_2(t):=\mathbf{E}_x^{-\rho}\left( 1_{\{\tau_0>t,B_t>z_t, \inf_{r\in [t-N,t]}B_r< z_t+t^{\gamma} \}} e^{-\int_{0}^{N} \varphi(Q_{z_t}(\inf_{r\in [t-N,t]}B_r,s)) \mathrm{d}s}\right).
	\end{align}
	Then $Q_{z_t}(x,t)\le e^{-\alpha t}(C_1(t)+C_2(t))$.	Using \eqref{Increasing-in-x-of-Q}, we have
	\begin{align}\label{C_1}
		C_1(t)\le &e^{-\int_{0}^{N} \varphi(Q_{z_t}( z_t+t^{\gamma},s)) \mathrm{d}s} \mathbf{P}_x^{-\rho}(\tau_0>t,B_t>z_t).
	\end{align}
	For any $s\leq N$, similarly to \eqref{lower_bd_M>t^{beta}}, for $t$  large enough such that $z_t\geq 0$, we have
	\begin{align}
		Q_{z_t}(z_t+t^{\gamma},s) &\ge \mathbb{P}_{z_t+t^{\gamma}}(\widetilde{M}_s^{-\rho}>z_t,\inf_{r\le s}\inf_{u\in N_r^{-\rho}}X_u(r)>0)\\ &=\mathbb{P}_{z_t+t^{\gamma}}(M_s^{-\rho}> z_t)-\mathbb{P}_{z_t+t^{\gamma}}(M_s^{-\rho}>z_t,\inf_{r\le s}\inf_{u\in N_r^{-\rho}}X_u(r)\le 0)\\
		&\ge \mathbb{P}_{0}(M_s^{-\rho}>-t^{\gamma})
		-\mathbb{P}_{0}(\inf_{r\le s}\inf_{u\in N_r^{-\rho}}X_u(r)\le -(z_t+t^{\gamma}))\\	&
		\geq \mathbb{P}_{0}(M_s>-t^{\gamma}) -\mathbb{P}_{0}(\max_{r\le s}M_r^{\rho}\ge t^{\gamma}),
	\end{align}
	where the last inequality follows from $M_s^{-\rho}\geq M_s$ and $z_t\geq 0$.
	Combining this with Lemma \ref{lemma_lower_bound_u} and
		 \eqref{lower_bd_M>t^{beta}}, we get
	\begin{align}
		Q_{z_t}(z_t+t^{\gamma},s)	\ge g(s)	-e^{-\alpha s}\mathbf{P}_0(B_s\le -t^{\gamma})
		-e^{-\sqrt{2\alpha}t^{\gamma}}.
	\end{align}
	Letting $N\to \infty$ in \eqref{C_1}  and combining the resulting conclusion with the above, we get
	\begin{align}
				\limsup_{N\to\infty}\limsup_{t\to\infty}
		\frac{C_1(t)}{\mathbf{P}_x^{-\rho}(\tau_0>t,B_t>z_t)}
		\le e^{-\int_{0}^{\infty} \varphi(g(s)) \mathrm{d}s}	=C_{sub}.
	\end{align}
	Applying Lemma  \ref{lemma_tau>t_with_drift} (ii), we get that for $z_t=0$,
	\begin{align}\label{asym_C1-2}
			\limsup_{N\to\infty}\limsup_{t\to\infty}
		C_1(t)\le  C_{sub}(1-e^{2\rho x}),
	\end{align}
	and for $z_t= \sqrt{t}z-\rho t$,
	\begin{align}\label{asym_C1}
				\limsup_{N\to\infty}\limsup_{t\to\infty}
		C_1(t)	\le  \frac{C_{sub}(1-e^{2\rho x})}{\sqrt{2\pi}}\int_{z}^{\infty}e^{-\frac{y^2}{2}} \mathrm{d} y.
	\end{align}
	Next, we show that $\lim_{t\to\infty}C_2(t)=0$. For	$\delta >0$, we have
	\begin{align}
		C_2(t)&\le\mathbf{P}_x^{-\rho}(\tau_0>t,B_t>z_t,  \inf_{r\in [t-N,t]}B_r< z_t+t^{\gamma})\\
		&\le	\mathbf{P}_x^{-\rho}(z_t<B_t<	z_t +\sqrt{t}\delta )\\&\quad+\mathbf{P}_x^{-\rho}(B_t\ge
		z_t+\sqrt{t}\delta ,	\inf_{r\in [t-N,t]}B_r< z_t+t^\gamma ).
	\end{align}
		Since the density of $B_t$ under $\mathbf{P}_x^{-\rho}$ is equal to $\frac{1}{\sqrt{2\pi t}} e^{-\frac{(y-x+\rho t)^2}{2t}} \leq \frac{1}{\sqrt{2\pi t}}$, we have
	\begin{align}
		&		\mathbf{P}_x^{-\rho}(z_t<B_t<	z_t +\sqrt{t}\delta )  \leq \int_{z_t}^{z_t+\sqrt{t}\delta}\frac{1}{\sqrt{2\pi t}}\mathrm{d}y =\frac{\delta}{\sqrt{2\pi}}.
	\end{align}
	Moreover, for any fixed $N>0$, similar to \eqref{B2_prob_upper_bd}, we have for $t\ge N$,
	\begin{align}
		\mathbf{P}_x^{-\rho}(B_t\ge	z_t+\sqrt{t}\delta , \inf_{r\in [t-N,t]}B_r<z_t+t^\gamma) \le
		\mathbf{P}_0\left( |B_N| >	\delta	\sqrt{t}-t^\gamma -N\rho\right).
	\end{align}
	Letting $t\to\infty$ first and then $\delta \to 0$, we get that, for any $\rho< 0$, $\lim_{t\to\infty}C_2(t)=0$.	Combining this with \eqref{upper_bd_Q_rho<0}, \eqref{asym_C1-2} and  \eqref{asym_C1}, we get the desired assertion.
\end{proof}

Now we consider the asymptotic behavior of $Q_z(x,t)$ as $t\to \infty$ for $\rho>0$.  Fix an $N>0$ and define
\begin{align}\label{def_fN}
	f_N^z(y):=	 \mathbf{E}_{y}^{-\rho}\left( 1_{\{\tau_0>N, B_N>z\}}	e^{-\int_{0}^{N}\varphi(
		Q_z(B_s,N-s))	\mathrm{d}s}\right),\quad	y>0,	z\ge 0.
\end{align}
Combining with \eqref{expression_u}, we easily see that
\begin{align}\label{F-and-P}
f_N^z(y)= e^{\alpha N} \P_y\left( \widetilde{M}_N>z\right).
\end{align}

\begin{lemma}\label{lemma_asym_rho>0}
Assume that $\rho>0, x>0$ and $z\geq 0$. It holds that
	\begin{align}
		& \lim_{t\to\infty}t^{3/2}e^{\frac{\rho^2}{2}t}\mathbf{E}_x^{-\rho}\left( 1_{\{\tau_0>t,B_t>z\}}
		e^{-\int_{t-N}^{t}\varphi(Q_z(B_s,t-s))	\mathrm{d}s}\right)	\nonumber\\
		& =	\sqrt{\frac{2}{\pi}} x e^{\rho x}  e^{(\alpha+\frac{\rho^2}{2})N}\int_{0}^{\infty}\P_y\left( \widetilde{M}_N>z\right) ye^{-\rho y}\mathrm{d}y.
	\end{align}
\end{lemma}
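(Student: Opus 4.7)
The plan is to apply the Markov property of $(B_s)$ at time $t-N$, express the resulting quantity as an integral against the explicit killed-drifted-Brownian density, and then pass to the limit by dominated convergence, where the hypothesis $\rho>0$ delivers the required integrable envelope.

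By the Markov property of $(B_s)$ under $\mathbf{P}_x^{-\rho}$ at time $t-N$, using $\{\tau_0>t\}=\{\tau_0>t-N\}\cap\{\min_{s\in[t-N,t]}B_s>0\}$ and the definition \eqref{def_fN} of $f_N^z$, one has
\[
\mathbf{E}_x^{-\rho}\Bigl(1_{\{\tau_0>t,\,B_t>z\}}e^{-\int_{t-N}^{t}\varphi(Q_z(B_s,t-s))\mathrm{d}s}\Bigr)=\mathbf{E}_x^{-\rho}\bigl(1_{\{\tau_0>t-N\}}f_N^z(B_{t-N})\bigr).
\]
Applying the Girsanov identity \eqref{change_mea_BMwith_drift} together with the standard transition density of Brownian motion killed at $0$, the right-hand side equals
\[
e^{\rho x-\rho^2(t-N)/2}\int_0^\infty e^{-\rho y}\,\frac{e^{-(y-x)^2/(2(t-N))}-e^{-(y+x)^2/(2(t-N))}}{\sqrt{2\pi(t-N)}}\,f_N^z(y)\,\mathrm{d}y.
\]

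Multiplying through by $t^{3/2}e^{\rho^2 t/2}$ produces the prefactor $t^{3/2}e^{\rho x}e^{\rho^2 N/2}$ in front of the integral. Rewriting the numerator of the kernel as $e^{-(y-x)^2/(2(t-N))}(1-e^{-2xy/(t-N)})$ and using the elementary inequality $1-e^{-u}\le u$ for $u\ge0$, one sees that for all sufficiently large $t$ the integrand is dominated by $Cxye^{-\rho y}f_N^z(y)$; since $f_N^z\le 1$ and $\rho>0$, the function $xye^{-\rho y}$ is an integrable majorant on $(0,\infty)$. A direct Taylor expansion shows the pointwise limit of the full integrand equals $\frac{2xy}{\sqrt{2\pi}}\,e^{\rho x}e^{\rho^2 N/2}\,e^{-\rho y}f_N^z(y)$, and the dominated convergence theorem then yields
\[
\lim_{t\to\infty}t^{3/2}e^{\rho^2 t/2}\mathbf{E}_x^{-\rho}\bigl(1_{\{\tau_0>t-N\}}f_N^z(B_{t-N})\bigr)=\sqrt{\frac{2}{\pi}}\,x\,e^{\rho x}\,e^{\rho^2 N/2}\int_0^\infty y e^{-\rho y}f_N^z(y)\,\mathrm{d}y.
\]

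Finally, comparing \eqref{def_fN} with the Feynman-Kac representation \eqref{expression_u} (equivalently, invoking \eqref{F-and-P}) gives $f_N^z(y)=e^{\alpha N}\P_y(\widetilde{M}_N^{-\rho}>z)$, and substituting produces the claimed formula with prefactor $e^{(\alpha+\rho^2/2)N}$. The only delicate step is the uniform-in-$t$ domination: the majorant $ye^{-\rho y}$ is integrable precisely because $\rho>0$, which is the same phenomenon responsible for the distinctive asymptotics of Lemma~\ref{lemma_tau>t_with_drift}(iii) and for the qualitative difference between this regime and the $\rho\le 0$ cases treated earlier.
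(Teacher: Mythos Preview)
Your proof is correct. Both you and the paper begin with the Markov property at time $t-N$ to reduce the expectation to $\mathbf{E}_x^{-\rho}\bigl(1_{\{\tau_0>t-N\}}f_N^z(B_{t-N})\bigr)$ and finish by invoking \eqref{F-and-P}. The difference is in how the limit is taken: the paper factors this as $\mathbf{E}_x^{-\rho}\bigl(f_N^z(B_{t-N})\mid\tau_0>t-N\bigr)\cdot\mathbf{P}_x^{-\rho}(\tau_0>t-N)$ and applies Lemma~\ref{lemma_tau>t_with_drift}(iii) as a black box (using both the asymptotics of $\mathbf{P}_x^{-\rho}(\tau_0>t)$ and the weak limit of $B_t$ conditioned on $\{\tau_0>t\}$, with $f_N^z$ bounded and continuous). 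You instead unpack the killed transition density explicitly via Girsanov and the reflection principle and pass to the limit by dominated convergence with the majorant $ye^{-\rho y}$. Your route is more self-contained---it effectively reproves the relevant piece of Lemma~\ref{lemma_tau>t_with_drift}(iii) on the spot---while the paper's route is more modular and keeps the Brownian asymptotics packaged in a separate lemma. Both lead to the same constant and the same dependence on $\rho>0$ for integrability.
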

\begin{proof}
By the Markov property,
\begin{align}
		 &\mathbf{E}_x^{-\rho}\left(1_{\{\tau_0>t, B_t>z\}}e^{-\int_{t-N}^{t} \varphi(Q_z(B_s,t-s))\mathrm{d}s}\right)	\nonumber\\
	 & =\mathbf{E}_x^{-\rho}\left( 1_{\{\tau_0>t-N\}}	\mathbf{E}_{B_{t-N}}^{-\rho}\left( 1_{\{\tau_0>N, B_N>z\}}e^{-\int_{0}^{N} \varphi(Q_z(B_s,N-s))\mathrm{d}s}\right)\right)\\		&=\mathbf{E}_x^{-\rho}\left( 1_{\{\tau_0>t-N\}}		f_N^z(B_{t-N})\right)		=\mathbf{E}_x^{-\rho}\left( 		f_N^z(B_{t-N})|\tau_0>t-N\right)		\mathbf{P}_x^{-\rho}(\tau_0>t-N).
\end{align}
Applying Lemma \ref{lemma_tau>t_with_drift} (iii), we get that
\begin{align}
	& \lim_{t\to\infty} t^{3/2} e^{\frac{\rho^2}{2}(t-N)} \mathbf{E}_x^{-\rho}\left( f_N^z(B_{t-N})| \tau_0>t-N\right)	\mathbf{P}_x^{-\rho}(\tau_0>t-N)\nonumber\\
	& = \rho^2 \int_{0}^{\infty}f_N^z(y)ye^{-\rho y}\mathrm{d}y \times  \sqrt{\frac{2}{\pi}}	x \rho^{-2}e^{\rho x} =  \sqrt{\frac{2}{\pi}} xe^{\rho x} \int_{0}^{\infty}f_N^z(y)ye^{-\rho y}\mathrm{d}y,
\end{align}
which implies the desired result together with \eqref{F-and-P}.
\end{proof}

{\bf Proofs of Theorem \ref{thm1} and Theorem \ref{thm3}.}
Parts (i) and (ii) of both
Theorem \ref{thm1} and Theorem \ref{thm3} follow
directly from
Lemmas  \ref{lemma_upper_bound_u}, \ref{upper_bound_u}, \ref{lemma3.4} and  \ref{lemma-expectation-survival-number}.
So we only need to prove part (iii) of both theorems. By \eqref{Identity},
it suffices
to prove (iii) of Theorem \ref{thm3}.
Fix $\rho>0$, $N>0$ and $z\geq 0$.
By \eqref{expression_u},  we have for $t\ge N$,
	\begin{align}\label{upper_u_rho>0}
		Q_z(x,t)	&=e^{-\alpha t}		\mathbf{E}_x^{-\rho}\left( 1_{\{\tau_0>t, B_t>z\}}e^{-\int_{0}^{t}\varphi(Q_z(B_s,t-s))\mathrm{d}s}\right)
		\\	&\le e^{-\alpha t}\mathbf{E}_x^{-\rho}\left( 1_{\{\tau_0>t, B_t>z\}}e^{-\int_{t-N}^{t}\varphi(Q_z(B_s,t-s))\mathrm{d}s}\right).
	\end{align}
Applying Lemma \ref{lemma_asym_rho>0}, we get
\begin{align}\label{upper_inf}
	\limsup_{t\to \infty}	t^{3/2}e^{(\alpha+\frac{\rho^2}{2})t}Q_z(x,t)\le \sqrt{\frac{2}{\pi}}
	x e^{\rho x}  e^{(\alpha+\frac{\rho^2}{2})N}\int_{0}^{\infty}\P_y\left( \widetilde{M}_N>z\right) ye^{-\rho y}\mathrm{d}y.
\end{align}
It follows from \eqref{upper_bound_u-2} that
	\begin{align}\label{lower_u_rho>0}
		Q_z(x,t)\ge e^{-\alpha t}\mathbf{E}_x^{-\rho}\left(
		1_{\{\tau_0>t, B_t>z\}}e^{-\int_{t-N}^{t}\varphi(Q_z(B_s,t-s))\mathrm{d}s}\right)
		e^{-\int_{0}^{t-N}\varphi(g(t-s))\mathrm{d}s}.
	\end{align}
Recall that the moment condition \eqref{LLogL-moment-condition} is equivalent to \eqref{Integral-of-phi-is-finite}, which implies that
\begin{align}\label{integer_left_part}
	1\ge e^{-\int_{0}^{t-N}\varphi(g(t-s))\mathrm{d}s}	=e^{-\int_{N}^{t}\varphi(g(s))\mathrm{d}s}
	\ge e^{-\int_{N}^{\infty}\varphi(g(s))\mathrm{d}s} \stackrel{N\to\infty}{\longrightarrow} 1.
\end{align}
Using Lemma \ref{lemma_asym_rho>0} again, we get
\begin{align}\label{lower_sup}
	&\liminf_{t\to \infty}	t^{3/2}e^{(\alpha+\frac{\rho^2}{2})t}	Q_z(x,t)\\
	&\ge	e^{-\int_{N}^{\infty}\varphi(g(s))\mathrm{d}s}	\sqrt{\frac{2}{\pi}}
	x e^{\rho x}    e^{(\alpha+\frac{\rho^2}{2})N}\int_{0}^{\infty}\P_y\left( \widetilde{M}_N>z\right) ye^{-\rho y}\mathrm{d}y.
\end{align}
Letting $N\to \infty$,
combining
 \eqref{upper_inf} and \eqref{lower_sup}, we get
\begin{align}\label{limit_u}
	&\lim_{t\to \infty}	t^{3/2}e^{(\alpha+\frac{\rho^2}{2})t}	Q_z(x,t)	\\ &= \sqrt{\frac{2}{\pi}}
	x e^{\rho x}  \lim_{N\to \infty}e^{(\alpha+\frac{\rho^2}{2})N}\int_{0}^{\infty}\P_y\left( \widetilde{M}_N>z\right) ye^{-\rho y}\mathrm{d}y= \sqrt{\frac{2}{\pi}}	x e^{\rho x}
	C_z(\rho),
\end{align}
where  $ C_z(\rho):=\lim_{N\to \infty}e^{(\alpha+\frac{\rho^2}{2})N}\int_{0}^{\infty}\P_y\left( \widetilde{M}_N>z\right) ye^{-\rho y}\mathrm{d}y.$ Now we show that $C_z(\rho)\in (0,\infty)$.
First, applying \eqref{upper_u_rho>0} with $N=t$, we get
\begin{align}
Q_z(x,t)	\le e^{-\alpha t}\mathbf{P}_x^{-\rho}\left(\tau_0>t, B_t>z\right).
\end{align}
Combining this with Lemma \ref{lemma_tau>t_with_drift} we get that
\begin{align}\label{upper_bd_limsup_u}
	\limsup_{t\to\infty}t^{\frac{3}{2}}e^{(\alpha +\frac{\rho^2}{2})t}
	Q_z(x,t)	\le \sqrt{\frac{2}{\pi}} x e^{\rho x} \int_{z}^{\infty}ye^{-\rho y} \mathrm{d}y.
\end{align}
Therefore,
 $C_z(\rho)\leq  \int_{z}^{\infty}ye^{-\rho y} \mathrm{d}y<\infty$. Next, by  \eqref{upper_bound_u-2}, we have
\begin{align}\label{lower-u-x-t}
	& Q_z(x,t)	\ge e^{-\int_{0}^{t}\varphi(g(s))\mathrm{d}s}e^{-\alpha t}
		\mathbf{P}_x^{-\rho}\left(\tau_0>, B_t>z\right)	\\	& \ge C_{sub} e^{-\alpha t}			\mathbf{P}_x^{-\rho}\left(\tau_0>, B_t>z\right),
\end{align}
where the last inequality follows from \eqref{Constant-C-sub}.
Using Lemma
\ref{lemma_tau>t_with_drift} (iii)
again, we get
\begin{align}
	\liminf_{t\to\infty}t^{\frac{3}{2}}e^{(\alpha +\frac{\rho^2}{2})t}  Q_z(x,t)
	\ge C_{sub}	\sqrt{\frac{2}{\pi}} x e^{\rho x} \int_{z}^{\infty}ye^{-\rho y} \mathrm{d}y.
\end{align}
Therefore, we see that $C_z(\rho)\geq C_{sub} \int_{z}^{\infty}ye^{-\rho y} \mathrm{d}y>0$.
Combining
\eqref{limit_u}  and  Lemma \ref{lemma-expectation-survival-number}, we get 
%the desired assertion for $\rho>0$.
\eqref{e:thrm1.3iiib}.
\qed

\textbf{Proof of Corollary \ref{cor1}:} We only give the proof of (iii). Taking $N=t$ in
\eqref{upper_u_rho>0},
by \eqref{lower-u-x-t} with $z=0$, we have
\begin{align}
	& \P_x\left(\widetilde{M}_t^{-\rho}>z \big| \widetilde{\zeta}^{-\rho}>t \right) =\frac{Q_z(x,t)}{u(x,t)} \nonumber\\
	&\leq  \frac{\mathbf{P}_x^{-\rho}\left(\tau_0>t, B_t>z\right) }{C_{sub} \mathbf{P}_x^{-\rho}\left(\tau_0>t\right)}= \frac{1}{C_{sub}}\mathbf{P}_x^{-\rho}\left(B_t>z\big| \tau_0>t\right).
\end{align}
By Lemma \ref{lemma_tau>t_with_drift} (iii), the tightness of $\widetilde{M}_t^{-\rho}$ follows
from
 the tightness of $B_t$ under $\mathbf{P}_x^{-\rho}\left(\cdot \big| \tau_0>t\right)$. Therefore, the weak convergence of $\widetilde{M}_t^{-\rho}$ is a consequence of  the existence of $C_z(\rho)$ in Theorem \ref{thm3} (iii), which implies the desired result.
\qed

\section{Proof of Theorem \ref{thm2}}\label{proof of thm2}

{\bf Proof of Theorem \ref{thm2}.}
Recall that $v(x, y)=\P_x(\widetilde{M}^{-\rho}>y)$, $x,y>0$.
We divide the proof into three steps. In Step 1, we use the Feynman-Kac formula and the strong Markov property to rewrite $v(x,y)$ as the product of two factors $A_1(x,y)$ and  $A_2(y)$, see  \eqref{second_expression_v} below. In Steps 2 and 3, we study the asymptotic behavior of $A_1(x,y)$ and
$A_2(y)$ as $y\to\infty$ respectively. Combining these
results, we arrive at the assertion of the theorem.

{\bf Step 1:}
For $0< x<y$, comparing the first branching time with $\tau_y$, we have
\begin{align}
	&v(x,y)=\int_{0}^{\infty} \beta e^{-\beta s} \mathbf{P}_x^{-\rho}(\tau_y< \tau_0,\tau_y\le s)\mathrm{d}s\\
	&+\int_{0}^{\infty}\beta e^{-\beta s}\mathbf{E}_x^{-\rho}\left(\left(1-\sum_{k=0}^{\infty} p_k\left(1-v(B_s,y)\right)^k \right)  1_{\{\tau_y\land \tau_0>s\}}\right)\\
	&=\mathbf{E}_x^{-\rho}	\left(e^{-\beta \tau_y}1_{\{\tau_y<\tau_0\}}\right)
	+\int_{0}^{\infty}\beta e^{-\beta s}\mathbf{E}_x^{-\rho}\left( \left(1-\sum_{k=0}^{\infty}p_k \left(1-v(B_s,y)\right)^k\right)1_{\{\tau_y\land \tau_0>s\}}\right)\mathrm{d}s.
\end{align}
By \cite[Lemma 4.1]{Dynkin01}, the above equation is equivalent to
\begin{align}
	&v(x,y)+\beta \int_{0}^{\infty}\mathbf{E}_x^{-\rho}\left(v(B_s,y)1_{\{\tau_y\land \tau_0>s\}}\right)\mathrm{d}s\\	&=\mathbf{P}_x^{-\rho}\left(\tau_y<\tau_0\right)
	+\beta\int_{0}^{\infty}\mathbf{E}_x^{-\rho}\left( \left(1-\sum_{k=0}^{\infty}p_k\left(1-v(B_s,y)\right)^k\right)1_{\{\tau_y\land \tau_0>s\}}\right)\mathrm{d}s,
\end{align}
which is also equivalent to
\begin{align}
	v(x,y)=\mathbf{P}_x^{-\rho}\left(\tau_y<\tau_0\right)
	- \mathbf{E}_x^{-\rho}\left( \int_{0}^{\tau_y\land\tau_0} \Phi(v(B_s,y))\mathrm{d}s\right),
\end{align}
where $\Phi$ is defined in \eqref{def_Phi}.
Using the Feynman-Kac formula, we get that
	\begin{align}\label{expression_v}
		v(x,y)	&=\mathbf{E}_x^{-\rho}\left( 1_{\{\tau_y<\tau_0\}} e^{-\alpha\tau_y-\int_{0}^{\tau_y}\varphi(v(B_s,y)) \mathrm{d}s}\right)
			\\	&=\frac{x}{y}e^{\rho(x-y)}\mathbf{E}_x^B\left( e^{-(\alpha+\frac{\rho^2}{2}) \tau_y-\int_{0}^{\tau_y}\varphi(v(B_s,y))\mathrm{d}s}\right),\nonumber
	\end{align}
where the last equality follows from Lemma \ref{lemma_Bessel_driftBM}.
Combining the second inequality in \eqref{expression_v}  and \eqref{expression_BM_driftBM} (with $h=0$),
it holds that
\begin{align}\label{upper-bound-of-v}
		& v(x,y)	\leq \mathbf{E}_x^{-\rho}\left(e^{-\alpha\tau_y}\right) 		=e^{\rho (x-y)} \mathbf{E}_x\left( e^{-(\alpha+\frac{\rho^2}{2})\tau_y}\right)	= e^{\left(\rho+\sqrt{2\alpha+\rho^2} \right)(x-y)}.
\end{align}
Fix a $\gamma\in (0,1)$. By the strong Markov property of Bessel-3 processes, we have
\begin{align}\label{second_expression_v}
	& v(x,y)=\frac{x}{y}	e^{\rho (x-y)}	\mathbf{E}_x^B\left( e^{-(\alpha+\frac{\rho^2}{2})
	\tau_{(y-y^{\gamma})}	-\int_{0}^{\tau_{(y-y^{\gamma})}}	\varphi(v(B_s,y))\mathrm{d}s}\right)
	\\	& \quad\quad\times \mathbf{E}_{y-y^{\gamma}}^B\left( e^{-(\alpha+\frac{\rho^2}{2})\tau_{y} -\int_{0}^{\tau_{y}}\varphi(v(B_s,y))\mathrm{d}s}\right)\nonumber\\
	&=: \frac{x}{y}	e^{\rho (x-y)}	A_1(x,y)	A_2(y),\nonumber
\end{align}
where
\begin{align}
	A_1(x,y):=\mathbf{E}_x^B\left(e^{-(\alpha+\frac{\rho^2}{2})	\tau_{(y-y^{\gamma})}	-\int_{0}^	{\tau_{(y-y^{\gamma})}}	\varphi(v(B_s,y))\mathrm{d}s}\right)
\end{align}
and
\begin{align}
	A_2(y)	:=\mathbf{E}_{y-y^{\gamma}}^B\left( e^{-(\alpha+\frac{\rho^2}{2})\tau_{y} -\int_{0}^{\tau_{y}}\varphi(v(B_s,y))\mathrm{d}s}\right).
\end{align}

{\bf Step 2:}
In this step, we study the asymptotic behavior of
$A_1(x, y)$ as $y\to\infty$.
By Lemma  \ref{lemma_Bessel_driftBM} with 
%$\rho$ replaced by $-\sqrt{2\alpha +\rho^2}$, $a=0, y=y-y^\gamma$ and $h=\varphi\circ v(\cdot, y)$,
$a=0$, $\rho$ replaced by $-\sqrt{2\alpha +\rho^2}$, $y$ replaced by $y-y^\gamma$, and $h=\varphi\circ v(\cdot, y)$,
 we get
\begin{align}
	&A_1(x,y)= 
	\frac{y-y^\gamma}{x}
	e^{-\sqrt{2\alpha+\rho^2}(y-y^\gamma-x)} \mathbf{E}_x^{\sqrt{2\alpha+\rho^2}} \left(1_{\{\tau_{(y-y^\gamma)} <\tau_0 \}}  e^{-\int_0^{\tau_{(y-y^\gamma)}} \varphi(v(B_s,y))\mathrm{d}s}\right)\nonumber\\
	&=: 
		\frac{y-y^\gamma}{x}
	e^{-\sqrt{2\alpha+\rho^2}(y-y^\gamma-x)} \hat{A}_1(x,y).
\end{align}
By the inequality $1-e^{-|x|}\leq |x|$,
we obtain that
\begin{align}\label{Diffenrence}
   & 0\leq  \mathbf{P}_x^{\sqrt{2\alpha+\rho^2}} \left( \tau_{(y-y^\gamma)} <\tau_0\right) - \hat{A}_1(x,y)
   \\   &= \mathbf{E}_x^{\sqrt{2\alpha+\rho^2}} \left(1_{\{\tau_{(y-y^\gamma)} <\tau_0 \}} \left(1-  e^{-\int_0^{\tau_{(y-y^\gamma)}} \varphi(v(B_s,y))\mathrm{d}s}\right)\right)\nonumber\\
   & \leq \mathbf{E}_x^{\sqrt{2\alpha+\rho^2}} \left(\int_0^{\tau_{(y-y^\gamma)}} \varphi(v(B_s,y))\mathrm{d}s\right).\nonumber
\end{align}
%Now set $y_*(x):= \inf\{t\geq y-y^\gamma: t- x\in \mathbb{N}\}$
Now set $y_*(x):= \inf\{w\geq y-y^\gamma: w- x\in \mathbb{N}\}$ to be the smallest number $w$ greater than or equal to $y-y^\gamma$ such that $w-x$ is a positive integer
and
$c_*:= \rho +\sqrt{2\alpha+\rho^2}>0$.
 By \eqref{upper-bound-of-v},
\begin{align}\label{Upper-Integral-of-v} 	
	& \mathbf{E}_x^{\sqrt{2\alpha+\rho^2}} \left(\int_0^{\tau_{(y-y^\gamma)}} \varphi(v(B_s,y))\mathrm{d}s\right)\leq \mathbf{E}_x^{\sqrt{2\alpha+\rho^2}} \left(\int_0^{\tau_{y_*(x)}} \varphi(e^{c_*(B_s-y)})\mathrm{d}s\right)\\
	& =
	\sum_{k=0}^{y_*(x)-x-1} 
	\mathbf{E}_x^{\sqrt{2\alpha+\rho^2}} \left(\int_{\tau_{x+k}}^{\tau_{x+k+1}} \varphi(e^{c_*(B_s-y)})\mathrm{d}s\right) \nonumber\\
	&  \leq  
	\sum_{k=0}^{y_*(x)-x-1} 
	 \mathbf{E}_x^{\sqrt{2\alpha+\rho^2}} \left(\tau_{x+k+1}- \tau_{x+k} \right)\varphi(e^{c_*(x+k+1-y)}) \nonumber\\
	&= \mathbf{E}_0^{\sqrt{2\alpha+\rho^2}} \left(\tau_{1}\right) 
	 \sum_{k=1}^{y_*(x)-x} 
	\varphi\left(
  e^{-c_*(y-1-y_*(x)+k)}\right).\nonumber
\end{align}
%According to the definition of $y_*(x)$,
According to the definition of $y_*(x)$, for $y$ large enough, 
\[
y-1-y_*(x) \geq y-1-(y-y^\gamma+1)= y^\gamma-2.
\]
Therefore, when $y$ is large enough so that 
$y^\gamma -2 \geq  y^{\gamma/2}$,
by Lemma \ref{upper-k}, we have
\begin{align}\label{Upper-Difference}
	& \mathbf{E}_x^{\sqrt{2\alpha+\rho^2}} \left(\int_0^{\tau_{(y-y^\gamma)}} \varphi(v(B_s,y))\mathrm{d}s\right)	\\	& \leq \mathbf{E}_0^{\sqrt{2\alpha+\rho^2}} \left(\tau_{1}\right) \sum_{k=1}^{\infty } \varphi\left(	e^{-c_*(y^{\gamma/2}+k)}\right)  \leq	\mathbf{E}_0^{\sqrt{2\alpha+\rho^2}} \left(\tau_{1}\right)
	\int_0^\infty \varphi\left(	e^{-c_*(y^{\gamma/2}+z)}\right) \mathrm{d}z\nonumber\\
	& =	\mathbf{E}_0^{\sqrt{2\alpha+\rho^2}} \left(\tau_{1}\right)		 \int_{y^{\gamma/2}}^\infty \varphi\left( e^{-c_*z}\right) \mathrm{d}z		 \stackrel{y\to\infty}{\longrightarrow} 0.\nonumber
\end{align}
Combining the above limit with  \eqref{Diffenrence},
it holds that
\begin{align}
    \lim_{y\to\infty} \left(\mathbf{P}_x^{\sqrt{2\alpha+\rho^2}} \left( \tau_{(y-y^\gamma)} <\tau_0\right) - \hat{A}_1(x,y)\right)=0.
\end{align}
Combining
 \eqref{Scale-function} and the definition of $\hat{A}_1$, we conclude that
\begin{align}\label{asymptotic_A1}
	& \lim_{y\to\infty} \frac{A_1(x,y)}{y}e^{\sqrt{2\alpha+\rho^2}(y-y^\gamma)}=\frac{e^{\sqrt{2\alpha+\rho^2}x}}{x} \lim_{y\to\infty}  \mathbf{P}_x^{\sqrt{2\alpha+\rho^2}} \left( \tau_{(y-y^\gamma)} <\tau_0\right)	\\	& = 	\frac{2}{x}	 \sinh\left(x\sqrt{2\alpha+\rho^2}\right).\nonumber
\end{align}

{\bf Step 3:}
In this step, we study the limit behavior for $A_2$.
By Lemma \ref{lemma_Bessel_driftBM}, we have
	\begin{align} \label{limit-A-2}
		&\qquad A_2(y)	=\frac{y e^{-\sqrt{2\alpha+\rho^2} y^{\gamma}}}{y-y^{\gamma}}\mathbf{E}_{y-y^{\gamma}}^{\sqrt{2\alpha+\rho^2}}\left( 1_{\{\tau_{y}<\tau_0\}}e^{-\int_{0}^{\tau_{y}}\varphi(v(B_s,y)) \mathrm{d}s}\right)			\\
		&=\frac{y e^{-\sqrt{2\alpha+\rho^2}y^{\gamma}}	}{y-y^{\gamma}}
		\left(\mathbf{E}_{y-y^{\gamma}}^{\sqrt{2\alpha+\rho^2}}\left( e^{-\int_{0}^{\tau_{y}}\varphi(v(B_s,y))\mathrm{d}s}\right)-\mathbf{E}_{y-y^{\gamma}}^{\sqrt{2\alpha+\rho^2}}\left(1_{\{\tau_{y}\ge \tau_0\}}e^{-\int_{0}^{\tau_{y}}\varphi(v(B_s,y))\mathrm{d}s}\right)\right),\nonumber
\end{align}
where, under $\mathbf{P}_{y-y^{\gamma}}^{\sqrt{2\alpha+\rho^2}}$, $B$ is a Brownian motion with drift $\sqrt{2\alpha+\rho^2}$ starting from $y-y^{\gamma}$. We claim that
\begin{align}
	&\lim_{y\to\infty}\mathbf{E}_{y-y^{\gamma}}^{\sqrt{2\alpha+\rho^2}}\left( e^{-\int_{0}^{\tau_{y}}\varphi(v(B_s,y))\mathrm{d}s}\right) =	C_*(\rho)   \in (0,\infty),\label{Claim1}\\	& \lim_{y\to\infty} \mathbf{E}_{y-y^{\gamma}}^{\sqrt{2\alpha+\rho^2}}\left( 1_{\{\tau_{y}\ge \tau_0\}}e^{-\int_{0}^{\tau_{y}}\varphi(v(B_s,y))\mathrm{d}s}\right)=0. \label{Claim2}
\end{align}
We prove \eqref{Claim2} first. In fact,
 by Lemma \ref{lemma_Bessel_driftBM} and \ref{lemma_property_Bessel3}, we have
\begin{align}
	& \mathbf{E}_{y-y^{\gamma}}^{\sqrt{2\alpha+\rho^2}}\left( 1_{\{\tau_{y}\ge \tau_0\}}e^{-\int_{0}^{\tau_{y}}\varphi(v(B_s,y))\mathrm{d}s}\right)
	\le \mathbf{P}_{y-y^{\gamma}}^{\sqrt{2\alpha+\rho^2}}(\tau_{y}\ge \tau_0)
	=1-\mathbf{P}_{y-y^{\gamma}}^{\sqrt{2\alpha+\rho^2}}(\tau_{y}< \tau_0)\\
	&=1-\frac{y-y^{\gamma}}{y}e^{-\sqrt{2\alpha+\rho^2}y^{\gamma}} \mathbf{E}_{y-y^{\gamma}}^B\left( e^{-\frac{2\alpha+\rho^2}{2}\tau_y}\right)\\
	&	=1-e^{-\sqrt{2\alpha+\rho^2}y^{\gamma}} \frac{\sinh((y-y^{\gamma})\sqrt{2\alpha+\rho^2})}{\sinh(y\sqrt{2\alpha+\rho^2})}
	\stackrel{y\to\infty}{\longrightarrow}0,
\end{align}
which gives \eqref{Claim2}. To prove \eqref{Claim1},  for any $y>0$,
define
\begin{align}
G(y):=\mathbf{E}_{y-y^{\gamma}}^{\sqrt{2\alpha+\rho^2}}\left( e^{-\int_{0}^{\tau_{y}}\varphi(v(B_s,y))\mathrm{d}s}\right).
\end{align}
For $z>y$, by the strong Markov property, we have
\begin{align*}
		&G(z)	=\mathbf{E}_{0}^{\sqrt{2\alpha+\rho^2}}\left( e^{-\int_{0}^{\tau_{z^{\gamma}}} \varphi(v(B_s+z-z^{\gamma},z))\mathrm{d}s}\right)	\\	&=\mathbf{E}_{0}^{\sqrt{2\alpha+\rho^2}} \left( e^{-\int_{0}^{\tau_{(z^{\gamma}-y^{\gamma})}}\varphi(v(B_s+z-z^{\gamma},z))\mathrm{d}s}\right)	\mathbf{E}_{z^{\gamma}-y^{\gamma}}^{\sqrt{2\alpha+\rho^2}}\left(  e^{-\int_{0}^{\tau_{z^{\gamma}}}\varphi(v(B_s+z-z^{\gamma},z))\mathrm{d}s}\right)\nonumber
\end{align*}
The first term of the above display is dominated by $1$ from above, and the second  is
equal to
 $\mathbf{E}_{0}^{\sqrt{2\alpha+\rho^2}}\left( e^{-\int_{0}^{\tau_{y^{\gamma}}}\varphi(v(B_s+z-y^{\gamma},z))\mathrm{d}s}\right)$.
 Hence,  $G(z)$ is bounded from above by
\begin{equation}\label{G}
G(z)\leq \mathbf{E}_{0}^{\sqrt{2\alpha+\rho^2}}\left( e^{-\int_{0}^{\tau_{y^{\gamma}}} \varphi(v(B_s+y-y^{\gamma}+z-y,y+z-y))\mathrm{d}s}\right).
\end{equation}
Note that, for $w>0$, it holds that
\begin{align}
	& v(x+w,y+w)	=\mathbb{P}_{x+w}(\exists \ t>0,\ u\in N_t^{-\rho}\ s.t. \min_{s\leq t} X_u(s)>0,\ X_u(t)>y+w ) \nonumber\\ &	\ge \mathbb{P}_{x+w}(\exists \ t>0,\ u\in N_t^{-\rho}\ s.t. \min_{s\leq t} X_u(s)>w,\ X_u(t)>y+w )=v(x,y).
\end{align}
Combining this with \eqref{G} we get that
\begin{align}
	G(z)\le\mathbf{E}_{0}^{\sqrt{2\alpha_+\rho^2}}\left( e^{-\int_{0}^{\tau_{y^{\gamma}}}\varphi(v(B_s+y-y^{\gamma},y))\mathrm{d}s}\right)	=G(y),\quad	z>y.
\end{align}
Thus the limit $C_*(\rho):=\lim_{y\to \infty}G(y)$ exists. Combining \eqref{limit-A-2}, \eqref{Claim1} and \eqref{Claim2}, we get
\begin{align}\label{asym-A-2}
	\lim_{y\to \infty}	A_2(y) e^{\sqrt{2\alpha+\rho^2}y^{\gamma}}=	C_*(\rho).
\end{align}
Now we show that $C_*(\rho)$ is finite and positive. The finiteness follows trivially from \eqref{Claim1}. To show $C_*(\rho)>0$, we assume without loss of generality that $y$ is an integer. By the strong Markov property and Jensen's inequality,
\begin{align}
	&\mathbf{E}_{y-y^{\gamma}}^{\sqrt{2\alpha+\rho^2}}\left( e^{-\int_{0}^{\tau_{y}}\varphi(v(B_s,y))\mathrm{d}s}\right) = \frac{\mathbf{E}_{0}^{\sqrt{2\alpha+\rho^2}}\left(e^{-\int_{0}^{\tau_{y}}\varphi(v(B_s,y))\mathrm{d}s}\right)}{\mathbf{E}_{0}^{\sqrt{2\alpha+\rho^2}}\left( e^{-\int_{0}^{\tau_{(y-y^\gamma)}}\varphi(v(B_s,y))\mathrm{d}s}\right) }\nonumber\\
	& \geq \mathbf{E}_{0}^{\sqrt{2\alpha+\rho^2}}\left( e^{-\int_{0}^{\tau_{y}}\varphi(v(B_s,y))\mathrm{d}s}\right) \geq \exp\left\{- \sum_{n=1}^y \mathbf{E}_0^{\sqrt{2\alpha+\rho^2}} \int_{\tau_{n-1}}^{\tau_n} \varphi(v(B_s,y)) \mathrm{d}s \right\}.
\end{align}
For $\tau_{n-1} \leq s\leq \tau_n$, by Lemma \ref{upper-k} and \eqref{upper-bound-of-v},
\[
\int_{\tau_{n-1}}^{\tau_n} \varphi(v(B_s,y)) \mathrm{d}s \leq (\tau_n -\tau_{n-1}) \varphi(v(n,y))\leq (\tau_n -\tau_{n-1})\varphi\left( e^{(n-y)(\sqrt{2\alpha+\rho^2}+\rho)}   \right).
\]
Note that, under $\mathbf{P}_0^{\sqrt{2\alpha+\rho^2}}$,
$\{\tau_{n}- \tau_{n-1}\}_{n\geq 1}$ are iid random variables with finite first moment. Therefore,
\begin{align}
	&\mathbf{E}_{y-y^{\gamma}}^{\sqrt{2\alpha+\rho^2}}\left( e^{-\int_{0}^{\tau_{y}}\varphi(v(B_s,y))\mathrm{d}s}\right)	\geq \exp\left\{-\mathbf{E}_0^{\sqrt{2\alpha+\rho^2}} \left(\tau_1\right)  \sum_{n=1}^y
	\varphi\left( e^{(n-y)(\sqrt{2\alpha+\rho^2}+\rho)}   \right)
	  \right\}\nonumber\\
	& = \exp\left\{-\mathbf{E}_0^{\sqrt{2\alpha+\rho^2}} \left(\tau_1\right)  \sum_{n=0}^{y-1}
	\varphi\left( e^{-n(\sqrt{2\alpha+\rho^2}+\rho)}   \right)	\right\}\nonumber\\
	&\geq \exp\left\{-\mathbf{E}_0^{\sqrt{2\alpha+\rho^2}} \left(\tau_1\right)  \sum_{n=0}^{\infty}
	\varphi\left( e^{-n(\sqrt{2\alpha+\rho^2}+\rho)}   \right) \right\},
\end{align}
which implies that
\begin{align}
	C_*(\rho)	\geq \exp\left\{-\mathbf{E}_0^{\sqrt{2\alpha+\rho^2}} \left(\tau_1\right)  \sum_{n=0}^{\infty}
	\varphi\left( e^{-n(\sqrt{2\alpha+\rho^2}+\rho)}   \right) \right\}.
\end{align}
Now by Lemma \ref{upper-k}, we have
\begin{align}
	\sum_{n=0}^{\infty} 	\varphi\left( e^{-n(\sqrt{2\alpha+\rho^2}+\rho)}   \right) \leq \varphi(1)+\int_0^\infty 	\varphi\left( e^{-z(\sqrt{2\alpha+\rho^2}+\rho)}   \right) \mathrm{d}z<\infty,
\end{align}
which implies that $C_*(\rho)>0$. Combining \eqref{second_expression_v},  \eqref{asymptotic_A1} and \eqref{asym-A-2}, we conclude that
\begin{align}\label{asymptotic_v}
	\lim_{y\to\infty}	e^{(\sqrt{2\alpha+\rho^2}+\rho) y} v(x,y)= 2C_*(\rho)	e^{\rho x}	\sinh(x\sqrt{2\alpha+\rho^2}),
\end{align}
which completes the proof of the theorem.
\qed

%	\vspace{.1in}
%	\textbf{Acknowledgment}:

	\vspace{.1in}

\end{document}